\DeclareMathOperator{\im}{im} 
\DeclareMathOperator{\Dgm}{Dgm}
\DeclareMathOperator{\Lk}{Lk}
\newcommand{\GH}{\text{GH}}
\theoremstyle{plain}
\newtheorem{theorem}{Theorem} %
\theoremstyle{definition}
\newtheorem{definition}[theorem]{Definition} %
\newtheorem{lemma}[theorem]{Lemma} %
\newtheorem{corollary}[theorem]{Corollary}%
\newtheorem{remark}[theorem]{Remark}%
\newtheorem{proposition}[theorem]{Proposition}
\newtheorem{example}[theorem]{Example}
\DeclareSymbolFont{AMSb}{U}{msb}{m}{n}
\title{\Large{\textbf{Analysing Multiscale Clusterings with Persistent Homology}}
}
\author{Juni Schindler\thanks{Corresponding author: \href{mailto:juni.schindler19@imperial.ac.uk}{\small\texttt{juni.schindler19@imperial.ac.uk}}, \; ORCID ID: 0000-0002-8728-9286}
\, and  \,  Mauricio Barahona\thanks{Corresponding author: \href{mailto:m.barahona@imperial.ac.uk}{\small\texttt{m.barahona@imperial.ac.uk}}, \; ORCID ID: 0000-0002-1089-5675} \\~\\
Department of Mathematics, Imperial College London, UK}
\date{}
\begin{document}\maketitle

\begin{abstract}\noindent
In data clustering, it is often desirable to find not just a single partition into clusters but a sequence of partitions that describes the data at different scales (or levels of coarseness). A natural problem then is to analyse and compare the (not necessarily hierarchical) sequences of partitions that underpin such multiscale descriptions. Here, we use tools from topological data analysis and introduce the Multiscale Clustering Filtration (MCF), a well-defined and stable filtration of abstract simplicial complexes that encodes arbitrary cluster assignments in a sequence of partitions across scales of increasing coarseness. We show that the zero-dimensional persistent homology of the MCF measures the degree of hierarchy of this sequence, and the higher-dimensional persistent homology tracks the emergence and resolution of conflicts between cluster assignments across the sequence of partitions. To broaden the theoretical foundations of the MCF, we provide an equivalent construction via a nerve complex filtration, and we show that, in the hierarchical case, the MCF reduces to a Vietoris-Rips filtration of an ultrametric space. Using synthetic data, we then illustrate how the persistence diagram of the MCF provides a feature map that can serve to characterise and classify multiscale clusterings.
\end{abstract}

\noindent{\slshape\bfseries Keywords:}  topological data analysis, persistent homology, multiscale clustering, non-hierarchical clustering, Sankey diagrams

\section{Introduction}

Data clustering,
whereby groups of similar data points (or `clusters')  in a data set are found in an unsupervised manner, has found widespread applications across  disciplines~\autocite{jainDataClusteringReview1999, luxburgClusteringScienceArt2012, schindlerCommunityVagueOperator2023}. Often, a single partition of a data set into clusters does not provide an appropriate description, specifically when the data set has intrinsic structure at several levels of resolution (or coarseness)~\autocite{delvenneStabilityGraphCommunities2010,schaubMarkovDynamicsZooming2012, schaubEncodingDynamicsMultiscale2012}. Examples include grouping cells into cell types and sub-types based on similar patterns of gene expression measured through single-cell transcriptomics~\autocite{hoekzemaMultiscaleMethodsSignal2022, venkatMultiscaleGeometricTopological2023}; extracting patterns in human mobility data at different spatial scales~\autocite{schindlerMultiscaleMobilityPatterns2023, andrisHumannetworkRegionsEffective2023}; or finding groups of documents that fall under finer and broader thematic categories~\autocite{altuncuFreeTextClusters2019, grootendorstBERTopicNeuralTopic2022}. In such cases, it is desirable to find a sequence of partitions at multiple levels of resolution that captures different characteristics of the data. 
Classically, such descriptions have emerged through variants of hierarchical clustering~\autocite{carlssonCharacterizationStabilityConvergence2010,carlssonAxiomaticConstructionHierarchical2013,cohen-addadHierarchicalClusteringObjective2019}, yet imposing a strict hierarchy on the data does not always capture the complexity of relationships across levels of resolution, and  can be restrictive in many applications. Therefore alternative formulations that generate not necessarily hierarchical sequences of partitions at different levels of resolution have been proposed, specifically in the graph partitioning literature~\autocite{lambiotteLaplacianDynamicsMultiscale2009,delvenneStabilityGraphCommunities2010, lambiotteRandomWalksMarkov2014,schaubMarkovDynamicsZooming2012}.

Given the description of a data set in the form of a \textit{multiscale clustering} consisting of a (non-hierarchical) sequence of partitions at different levels of resolution from fine to coarse (i.e., a Sankey diagram), a natural problem then is to analyse and characterise the sequence of partitions as a whole, and to compare such multiscale clusterings. Methods to analyse hierarchical sequences of partitions are well established in the literature; in particular, the correspondence between dendrograms and ultrametric spaces has proved useful for measuring the similarity of hierarchical sequences of partitions~\autocite{carlssonCharacterizationStabilityConvergence2010,carlssonAxiomaticConstructionHierarchical2013}. In contrast, the study of non-hierarchical sequences of partitions, which correspond to general Sankey diagrams with non-trivial crossings, has received less attention.

\begin{figure}[!htb]
    \centering
    \includegraphics[width=1\textwidth]{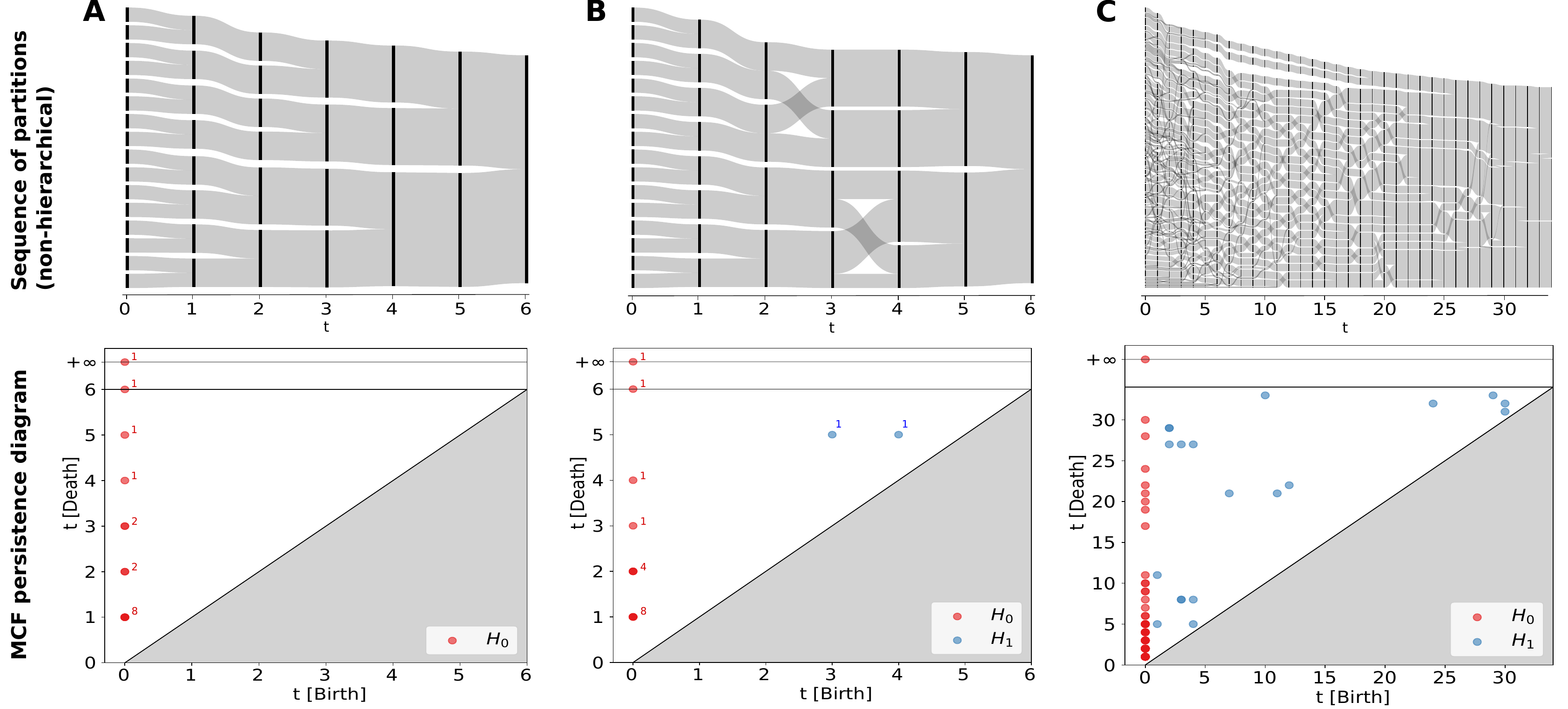}
    \caption{These simple examples illustrate how the persistence diagram (PD) of the Multiscale Clustering Filration (MCF) summarises the properties of multiscale sequences of partitions. 
    \textbf{A} For a hierarchical  dendrogram on 16 data points (visualised here as a Sankey diagram at scales $t=0, \ldots, 6$), the MCF is equivalent to a Vietoris-Rips filtration on the corresponding ultrametric space (see Corollary~\ref{cor:hierarchical_cag_equivalent}); hence its PD has only zero-dimensional invariants (indicated by red circles, with the number of overlapping circles indicated) which count the merges in the dendrogram (see Corollary~\ref{cor:higher_dim_ph_0_when_hierarchical}). 
    \textbf{B} For a non-hierarchical multiscale clustering (for which the Sankey diagram has non-trivial crossings), the MCF captures the emergence of conflicts between cluster assignments at scales $t=3$ and $t=4$ through the birth of one-dimensional invariants in the PD (blue points) and the resolution of these conflicts at $t=5$ through the death of the invariants (see Remark~\ref{rem:k-conflicts} and Proposition~\ref{prop:kconflicts-intersections}). 
    \textbf{C}~For a complex non-hierarchical multiscale clustering on 270 points, the PD of the  MCF provides a concise  description in terms of births and deaths of invariants of different dimensions. Our numerical  experiments in Section~\ref{sec:NumericalExperiments} show that the Wasserstein distance between PDs of structurally similar sequences of partitions is small (see Figure~\ref{fig:model_comparison}), hence the PDs can be used as feature maps to characterise and classify multiscale clusterings.}
    \label{fig:nhmsbm-illustration_mcf_pd}
\end{figure}

\subsection{Approach and Contributions}

Here, we address the characterisation of multiscale clusterings from the perspective of topological data analysis (TDA)~\autocite{carlssonTopologyData2009,deyComputationalTopologyData2022}. TDA allows us
to take into account the whole sequence of partitions in an integrated manner. In particular, we use persistent homology (PH)~\autocite{edelsbrunnerTopologicalPersistenceSimplification2002, otterRoadmapComputationPersistent2017} to track the emergence and resolution of \textit{conflicts} between cluster assignments%
in a non-hierarchical sequence of partitions.
To do so, we introduce a well-defined, stable filtration of abstract simplicial complexes, denoted the \textit{Multiscale Clustering Filtration} (MCF), which naturally encodes crossing patterns of cluster assignments in a sequence of partitions.

We then exploit the computable characteristics of the MCF to characterise multiscale clusterings. In particular: (i) the zero-dimensional~PH of MCF measures the level of hierarchy of the sequence of partitions, and (ii) the birth and death times in the higher-dimensional PH correspond to the emergence and resolution of conflicts between cluster assignments in the sequence of partitions. Therefore the persistence diagram (PD) provides a concise summary of the whole sequence of partitions (see Figure~\ref{fig:nhmsbm-illustration_mcf_pd} for an illustration).
Further, to broaden the theoretical and practical foundations of the MCF, we provide an equivalent construction via a nerve complex filtration, which can be advantageous for particular data sets, and we show that, for a hierarchical sequence of partitions, the MCF reduces to a Vietoris-Rips (VR) filtration of an ultrametric space~\autocite{carlssonCharacterizationStabilityConvergence2010}.

Numerical experiments on models with planted ground truth  (single scale and multiple scales, both hierarchical and non-hierarchical) show that the persistence diagram (PD) of the Multiscale Clustering Filration (MCF) quantifies the level of hierarchy, and recovers the planted structure as robust partitions that resolve many conflicts.
We also show that the PD of the MCF provides a mapping that can be used for the analysis and comparison, via distances, of non-hierarchical sequences of partitions. Similarly to the Gromov-Hausdorff distance used with ultrametrics of dendrograms of hierarchical sequences of partitions~\autocite{carlssonCharacterizationStabilityConvergence2010}, the Wasserstein or bottleneck distances between the PDs of the MCF can be used to distinguish, cluster or classify non-hierarchical sequences of partitions. 
We provide \texttt{Python} code for general use~\footnote{\url{https://github.com/barahona-research-group/MCF}} where we implement the MCF using the \texttt{GUDHI} software~\autocite{mariaGudhiLibrarySimplicial2014}. 

\subsection{Organisation of the Article}

Section~\ref{sec:Background} provides definitions and background for multiscale clustering and TDA. 
In Section~\ref{sec:Methods} we define the MCF and show how the zero-dimensional PH measures hierarchy in a sequence of partitions, and the higher-dimensional PH captures cluster assignment conflicts. Numerical experiments are presented in Section~\ref{sec:NumericalExperiments}. Further theoretical results in Section~\ref{sec:alternative_filtrations} present an equivalent MCF construction based on nerve complexes and the analysis of MCF for the particular case of a hierarchical dendrogram.

\section{Theoretical Background}\label{sec:Background}

\subsection{Multiscale 
Clustering}\label{sec:background_partitions}

We first provide basic definitions and facts about partitions of finite sets drawn from the combinatorics literature~\autocite{brualdiIntroductoryCombinatorics2010,stanleyEnumerativeCombinatoricsVolume2011} and then introduce the (multiscale) clustering task in unsupervised machine learning.

\subsubsection{Partitions of a Set, Refinement, Hierarchy} %
A \textit{partition} $\mathcal{P}$ of a finite set $X$ is a collection of $c=\#\mathcal{P}$  non-empty and pairwise disjoint subsets $C_1, ..., C_c$ of $X$ whose union is $X$, and  we write $\mathcal{P}=\{C_1, ..., C_c\}$. 
The subsets $C_1, ..., C_c$ are called the %
\textit{clusters} of the partition and $\# C_i$ is the size of cluster $C_i$. The partition $\mathcal{P}$ induces an equivalence relation $\sim_\mathcal{P}$ on $X$, where $x\sim_\mathcal{P} y$ if $x,y\in X$  are in the same cluster of the partition and so the equivalence classes of $\sim_\mathcal{P}$ are the clusters $C_1, ..., C_c$. %
Let $\Pi_X$ denote the set of all partitions of $X$.  %
We say that $\mathcal{P}\in \Pi_X$ is a \textit{refinement} of $\mathcal{Q}\in \Pi_X$ denoted by $\mathcal{P}\le\mathcal{Q}$ if every cluster in $\mathcal{P}$ is contained in a cluster of $\mathcal{Q}$. This makes $(\Pi_X,\le)$ a finite partially ordered set (\textit{poset}). A finite sequence of $M$ partitions $(\mathcal{P}^1, ..., \mathcal{P}^M)$ in $\Pi_X$, denoted by $(\mathcal{P}^m)_{m\le M}$, is called \textit{hierarchical} if $\mathcal{P}^1 \le  ... \le \mathcal{P}^M$, and \textit{non-hierarchical} otherwise. %
Given such a sequence, we denote for each $m\le M$ the equivalence relation $\sim_{\mathcal{P}^m}$ simply by $\sim_m$.

\subsubsection{Multiscale Clustering} 
In unsupervised learning, the task of \textit{(hard) clustering} consists of grouping data points into clusters to obtain a partition of the data set, $\mathcal{P}$, in the absence of ground truth labels. There is an abundance of clustering algorithms based on different heuristics~\autocite{jainDataClusteringReview1999,luxburgClusteringScienceArt2012,schaubManyFacetsCommunity2017}. We call \textit{multiscale clustering} the task of obtaining a sequence of partitions $(\mathcal{P}^m)_{m\le M}$ of the set $X$ (rather than only a single partition). 
The sequence of partitions can be represented through a continuous \textit{scale} or \textit{resolution} function $\theta:[t_1,\infty)\rightarrow\Pi_X$ so that any scale $t\ge t_1$ is assigned a partition $\theta(t)$. 
The function $\theta$ is piecewise-constant as given by a finite set of \textit{critical values} $t_1 < t_2 < ... < t_M \in \mathbbm{R}$ such that
\begin{equation}\label{eq:scale_function}
    \theta(t) = \begin{cases}
    \mathcal{P}^{t_i} & \quad t_{i}\le t < t_{i+1},\\
    \mathcal{P}^{t_M} & \quad t_M\le t.
    \end{cases}
\end{equation}
If $x,y\in X$ are part of the same cluster in $\theta(t)$ we write $x\sim_t y$.
Classical methods that can lead to multiscale clusterings are 
variants of \textit{hierarchical clustering}, where the hierarchical sequence is indexed by $t$ corresponding to the \textit{height} in the associated \textit{dendrogram}~\autocite{carlssonCharacterizationStabilityConvergence2010, carlssonAxiomaticConstructionHierarchical2013}. Alternatively, in the graph partitioning literature, \textit{Markov Stability} (MS)~\autocite{lambiotteLaplacianDynamicsMultiscale2009,delvenneStabilityGraphCommunities2010, lambiotteRandomWalksMarkov2014,schaubMarkovDynamicsZooming2012} leads to a non-hierarchical sequence of partitions indexed by a scale $t$ corresponding to the \textit{Markov time} of a random walk used to reveal the multiscale structure in a given graph. Non-hierarchical sequences of partitions are naturally represented by \textit{Sankey diagrams}, which allow for (non-trivial) crossings~\autocite{zarateOptimalSankeyDiagrams2018}, see Section~\ref{S_sec:Sankey} for more details. Similar to the hierarchical case, where $\theta(s)\le \theta(t)$ for $s \le t$, scale $t$ plays the role of a ``coarsening parameter'' in non-\allowbreak hierarchical multiscale clustering, i.e., partitions $\theta(t)$ tend to get coarser with increasing $t$.

\subsection{Persistent homology}\label{sec:background_ph}

Persistent homology (PH) reveals emergent topological properties of point cloud data (connectedness, holes, voids, etc.) in a robust manner by defining a filtered simplicial complex of the data and computing simplicial homology groups at different scales to track persistent topological features~\autocite{edelsbrunnerTopologicalPersistenceSimplification2002}. Here we provide a summary of key concepts of PH 
---for detailed definitions see~\autocite{edelsbrunnerTopologicalPersistenceSimplification2002,otterRoadmapComputationPersistent2017,zomorodianComputingPersistentHomology2005,edelsbrunnerComputationalTopologyIntroduction2010,deyComputationalTopologyData2022,edelsbrunnerComputationalTopologyIntroduction2010}.

\subsubsection{Simplicial Complex}

Given a finite set of data points, or \textit{vertices}, $V$, a \textit{simplicial complex} $K$ is a subset of the power set $2^V$ (without the empty set) that is closed under the operation of building subsets. Its elements $\sigma\in K$ are called \textit{abstract simplices} and for a subset $\tau\subset\sigma$ we thus have $\tau\in K$ and $\tau$ is called a \textit{face} of $\sigma$. One example of a simplicial complex defined on the vertices $V$ is the \textit{solid simplex} $\Delta V$ given by all non-empty subsets of $V$. Moreover, the \textit{link} $\Lk \tau$ of a simplex $\tau\in K$ is defined as the simplicial complex 
\begin{equation}\label{eq:link_sim_complex}
    \Lk \tau:=\{\sigma\in K\;|\; \sigma\cap\tau=\emptyset, \, \sigma\cup\tau \in K\}.
\end{equation}
 A simplex $\sigma\in K$ is called $k$-dimensional if the cardinality of $\sigma$ is $k+1$ and the subset of $k$-dimensional simplices is denoted by $K_k\subset K$. The dimension  $\dim(K)$ of the complex $K$ is defined as the maximal dimension of its simplices.

\subsubsection{Simplicial Homology}

For an arbitrary field $\mathbbm{F}$ (usually a finite field $\mathbbm{Z}_p$ for a prime number $p\in\mathbbm{N}$) %
and for all dimensions $k\in\{0,1,...,\dim(K)\}$ we define the $\mathbbm{F}$-vector space $C_k(K)$ with basis vectors given by %
$K_k$. The elements $c_k\in C_k(K)$ are called \textit{$k$-chains} and can be represented by a formal sum
    $$c_k = \sum_{\sigma \in K_k} a_\sigma \sigma$$
with coefficients $a_\sigma\in\mathbbm{F}$. After fixing a total order on the set of vertices $V$, we can define the so-called \textit{boundary operator} as a linear map $\partial_k:C_k \longrightarrow C_{k-1}$ through its operation on the basis vectors $\sigma=[v_0,v_1,...,v_k]\in K_k$ given by the alternating sum
    $$\partial_k(\sigma) = \sum_{i=0}^k (-1)^i [v_0,v_1,...,\hat{v_i},...,v_k],$$
where $\hat{v_i}$ indicates that vertex $v_i$ is deleted from the simplex. It is easy to show that the boundary operator fulfils the property $\partial_k \circ \partial_{k+1} = 0$, or equivalently, $\im{\partial_{k+1}}\subset \ker{\partial_k}$. Hence, the boundary operator connects the vector spaces $C_k$ for $k\in \{0,1,...,\dim(K)\}$ in a sequence of vector spaces and linear maps
    .$$.. \xrightarrow{\partial_{k+1}} C_k \xrightarrow{\partial_k} C_{k-1} \xrightarrow{\partial_{k-1}} ... \xrightarrow{\partial_2} C_{1} \xrightarrow{\partial_1} C_{0} \xrightarrow{\partial_0} 0,$$
which is called a \textit{chain complex}. The elements in the \textit{cycle group} $Z_k := \ker{\partial_k}$ are called \textit{$k$-cycles} and the elements in the \textit{boundary group} $B_k := \im{\partial_{k+1}}$ are called the \textit{$k$-boundaries}. To characterise topological spaces by their holes or higher-dimensional voids, homology determines the non-bounding cycles, i.e., those $k$-cycles that are not the $k$-boundaries of $k+1$-dimensional simplices. The \textit{$k$-th homology group} $H_k$ of the chain complex is thus defined as the quotient of vector spaces
    $$H_k := Z_k/B_k,$$
whose elements are equivalence classes $[z]$ of $k$-cycles $z\in Z_k$. For each $k\in\{0,...,\dim(K)\}$, the rank of $H_k$ is called the \textit{$k$-th Betti number} denoted by $\beta_k$.

\subsubsection{Filtrations}

To analyse topological properties across different scales, one defines a \textit{filtration} $\mathcal{F}$ of the simplicial complex $K$ as a sequence of $M\in \mathbbm{N}$ increasing simplicial subcomplexes
    $$\emptyset =: K^0 \subset K^1 \subset ... \subset K^M:=K,$$
and we call $K$ a \textit{filtered complex}. %
In applications, the filtration is often indexed by a finite sequence of real numbers.
A common choise for point cloud data $V\subset\mathbbm{R}^d$ is the \textit{VR filtration} $(K^\epsilon) _{\epsilon>0}$, which is defined as
 \begin{equation}\label{eq:VR_point_cloud}
K^\epsilon = \{ \sigma\subset V \;\;|\;\; \forall v,w\in \sigma: \;\;\parallel v-w\parallel < \epsilon \},
 \end{equation}
where $||\cdot ||$ denotes the Euclidean norm on $\mathbb{R}^d$. As the set of vertices $V$ is finite, there are only finitely many critical values $0<\epsilon_1<\epsilon_2<...<\epsilon_M$ at which the simplicial complex $K^\epsilon$ changes and so $K^i:=K^{\epsilon_i}$ is a well-defined filtration. For network data, filtrations are often based on combinatorial features %
such as cliques under different thresholding schemes~\autocite{horakPersistentHomologyComplex2009,aktasPersistenceHomologyNetworks2019}. Given an undirected graph $G=(V,E)$ with %
weighted adjacency matrix $A$ we define sublevel graphs $G_\delta = (V,E_\delta)$, where $E_\delta=\left\{\{i,j\}\in E \;|\; A_{ij}<\delta\right\}$ is the set of edges with weight smaller or equal to $\delta>0$, and the \textit{clique complex filtration} $(K^\delta) _{\delta>0}$ of $G$ is then given by
\begin{equation}\label{eq:clique_filtration}
    K^\delta = \{ \sigma\in V \;\; | \;\; \sigma\;\; \text{is a clique in}\;\; G_\delta\}.
\end{equation}
Both the VR and clique complex filtrations %
have the property of being \textit{2-determined}, i.e., if each pair of vertices in a set $\sigma\subseteq K$ is a 1-simplex in a simplicial complex $K^i$ for $i\le M$, then $\sigma$ itself is a simplex in the complex $K^i$.

\subsubsection{Persistent Homology}

The goal of PH is to find the long- or short-lasting non-bounding cycles in a filtration. %
For each subcomplex $K^i$ with filtration index $i\in\{0,1,...,M\}$ and dimension $k\in\{0,1,...,\dim(K)\}$, we get a boundary operator $\partial_k^i$ and associated groups $C_k^i,Z_k^i,B_k^i,H_k^i$. For $p\ge 0$ such that $i+p\le M$, the \textit{$p$-persistent $k$-th homology group} of $K^i$ is defined as
\begin{equation}\label{eq:persistent_homology_group}
    H_k^{i,p} := Z_k^i\;/\left (B_k^{i+p}\cap Z_k^i \right ),
\end{equation}
which is well-defined because both $B_k^{i+p}$ and $Z_k^i$ are subgroups of $C_k^{i+p}$. %
The rank of %
$H_k^{i,p}$ is called the \textit{$p$-persistent $k$-th Betti number} of $K^i$, denoted by $\beta_k^{i,p}$, %
and can be interpreted as the number of non-bounding $k$-cycles that were born at filtration index $i$ or before and persist at least $p$ filtration indices, i.e., they are still `alive' in the complex $K^{i+p}$.

\subsubsection{Persistence Diagrams}

The PH measures the `lifetime' of non-bounding cycles %
across the filtration $\mathcal{F}$. If a non-bounding $k$-cycle $[z]\neq 0$ emerges at filtration index $i$, i.e., $[z]\in H_k^i$, but was absent in $H_k^l$ for $l<i$, then we say that the filtration index $i$ is the \textit{birth} of the non-bounding cycle $[z]$. The \textit{death} $j$ is now defined as the first filtration index such that the previously non-bounded $k$-cycle is turned into a $k$-boundary in $H_k^j$, i.e., $[z]=0$ in $H_k^j$. The lifetime of the non-bounded cycle $[z]$ is then given by $j-i$. If a cycle remains non-bounded throughout the filtration, its death is formally set to $\infty$. One can compute the \textit{number of independent $k$-dimensional classes} $\mu_k^{i,j}$ that are born at filtration index $i$ and die at index $j=i+p$ as follows:
    $$\mu_k^{i,j} = (\beta_k^{i,p-1}-\beta_k^{i,p})-(\beta_k^{i-1,p-1}-\beta_k^{i-1,p}),$$
where the first difference computes the number of classes that are born at $i$ or before and die at $j$ and the second difference computes the number of classes that are born at $i-1$ or before and die at $j$. Drawing the set of birth-death tuples $(i,j)$ as points in the extended plane $\bar{\mathbbm{R}}^2=(\mathbbm{R}\cup\{+\infty\})^2$ with multiplicity $\mu_k^{i,j}$ and adding points on the diagonal with infinite multiplicity produces the $k$-\textit{dimensional persistence} diagram denoted by $\Dgm_k(\mathcal{F})$. The PD encodes all information about the PH groups because the Betti numbers $\beta_k^{i,p}$ can be computed from the multiplicities $\mu_k^{i,j}$. This is the statement of the \textit{Fundamental Lemma of PH}~\cite[p. 152]{edelsbrunnerComputationalTopologyIntroduction2010}:
\begin{equation}\label{eq:Fundamental_Lemma_PH}
    \beta_k^{i,p} = \sum_{\ell\le i}\sum_{j>i+p}\mu_k^{\ell,j}.
\end{equation}

\subsubsection{Distance Measures for PDs} For a filtration $\mathcal{F}$ on $K$ indexed by a finite sequence of real numbers one can define a \textit{filtration function} $f:K \rightarrow \mathbbm{R}$ such that $f(\sigma)=\min\{t\in \mathbbm{R}\; | \; \sigma \in K^t\}$ for $\sigma \in K$, which is \textit{simplex-wise monotone}, i.e., $f(\sigma^\prime) \le f(\sigma)$ for every $\sigma^\prime \subseteq \sigma\in K$~\autocite{deyComputationalTopologyData2022}. The filtration $\mathcal{F}$ can be recovered from the sublevel sets %
    $$K^t = f^{-1}(-\infty,t).$$
To compare two different filtrations $\mathcal{F}$ and $\mathcal{G}$ induced by filtration functions $f,g:K \rightarrow \mathbbm{R}$, %
it is possible to measure the similarity of their respective PDs $\Dgm_k(\mathcal{F}),\Dgm_k(\mathcal{G})\subset\bar{\mathbbm{R}}^2$. %
Let $\Phi = \{\phi:\Dgm_k(\mathcal{F})\rightarrow \Dgm_k(\mathcal{G})\}$ denote the set of bijections between the two diagrams. %
For $q\ge 1$, the \textit{q-th Wasserstein distance} is a metric on the space of PDs defined as
\begin{equation}\label{eq:Wasserstein_distance}
    d_{W,q}\left(\Dgm_k(\mathcal{F}),\Dgm_k(\mathcal{G})\right) = \inf_{\phi\in \Phi}\left[\sum_{x\in \Dgm_k(\mathcal{F})}(\parallel x - \phi(x)\parallel_q)^q\right]^{1/q},
\end{equation}
where $\parallel\cdot\parallel_q$ denotes the $L_q$ norm. %
Moreover, for $q=\infty$ one recovers the \textit{bottleneck distance}
\begin{equation}\label{eq:bottleneck-distance}
    d_{W,\infty}\left(\Dgm_k(\mathcal{F}),\Dgm_k(\mathcal{G})\right) = \inf_{\phi\in \Phi}\sup_{x\in \Dgm_k(\mathcal{F})}||x - \phi(x)||_\infty.
\end{equation}

\section{Multiscale Clustering Filtration}\label{sec:Methods}

Let $\theta:[t_1,\infty)\rightarrow \Pi_X, t \mapsto \theta(t)$ be a (not necessarily hierarchical) sequence of partitions such that $\theta(t)$ is a partition of the set $X=\{x_1,x_2,...,x_N\}$ 
and the scale index $t\in[t_1,\infty)$ 
has finitely many critical values $t_1<t_2<...<t_M$, see Equation~\eqref{eq:scale_function}. %

\subsection{Construction of the Multiscale Clustering Filtration}\label{sec:MCF_construction}

The filtration construction outlined in the following only takes the multiscale clustering $\theta$ as input and is thus independent of the chosen clustering method.

\begin{definition}[Multiscale Clustering Filtration]\label{def:MCF_Construction}
   For a sequence of partitions $\theta:[t_1,\infty)\rightarrow\Pi_X$, its \textit{Multiscale Clustering Filtration} (MCF) denoted by $\mathcal{M}=(K^t)_{t\ge t_1}$ is the filtration of abstract simplicial complexes defined for $t\ge t_1$ as the union
    \begin{equation}\label{eq:MCF_construction}
        K^t := \bigcup_{t_1 \le s \leq t} \bigcup_{C \in\theta(s)} \Delta C,
    \end{equation}
    where $\Delta C:=2^C$ denotes the $(\#C-1)$-dimensional solid simplex defined on the cluster $C\subseteq X$.
\end{definition}

The MCF aggregates information across the whole sequence of partitions by taking a union over clusters interpreted as solid simplices and the filtration index $t$ is provided by the scale of the partition. It is easy to see that the MCF is a well-defined filtration.

\begin{proposition}\label{prop:MCF_is_filtration}
    The MCF $\mathcal{M}=(K^{t})_{t \ge t_1}$ is a filtration of abstract simplicial complexes.
\end{proposition}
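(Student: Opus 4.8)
The plan is to verify directly the two defining properties of a filtration from Definition~\ref{def:MCF_Construction}: first, that each term $K^{t_m}$ is an abstract simplicial complex, and second, that the terms form an increasing sequence, $K^{t_m}\subseteq K^{t_{m+1}}$ for every $m<M$. I would begin by unpacking the union in the definition. Reading each solid simplex $\Delta C$ as the collection of all non-empty subsets of the cluster $C$, the complex $K^{t_m}=\bigcup_{l\le m}S^{t_l}$ is the collection of simplices
\begin{equation}
    K^{t_m}=\bigcup_{l\le m}\ \bigcup_{C\in\mathcal{P}^{t_l}}\Delta C,
\end{equation}
that is, the set of all non-empty $\sigma\subseteq X$ such that $\sigma\subseteq C$ for some cluster $C$ appearing in one of the partitions $\mathcal{P}^{t_1},\dots,\mathcal{P}^{t_m}$.

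To establish that $K^{t_m}$ is a simplicial complex, I would invoke the elementary fact that an arbitrary union of simplicial complexes (viewed as subsets of $2^X\setminus\{\emptyset\}$) is again a simplicial complex, since closure under taking subsets is preserved under unions. Concretely, each $\Delta C$ is a simplicial complex by construction; and if $\sigma\in K^{t_m}$ with $\tau\subseteq\sigma$ non-empty, then $\sigma\in\Delta C$ for some such cluster $C$, so that $\tau\subseteq\sigma\subseteq C$ forces $\tau\in\Delta C\subseteq K^{t_m}$. Hence $K^{t_m}$ is closed under taking faces and is therefore an abstract simplicial complex on the vertex set $X$.

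The nesting is then immediate from the monotonicity of the indexing set in the union: since $\{l:l\le m\}\subseteq\{l:l\le m+1\}$, the union defining $K^{t_m}$ is a sub-union of the one defining $K^{t_{m+1}}$, whence $K^{t_m}\subseteq K^{t_{m+1}}$. Finally, adopting the convention $K^{t_0}:=\emptyset$ to match the template $\emptyset=K^0\subset\cdots\subset K^M=K$, I obtain the required increasing chain $\emptyset=K^{t_0}\subseteq K^{t_1}\subseteq\cdots\subseteq K^{t_M}=K$. I expect no genuine obstacle in this argument: the only points requiring any care are the correct reading of the nested union and the face-closure check for the union, both of which are routine, so the whole proposition reduces to these two short verifications.
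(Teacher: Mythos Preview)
Your proposal is correct and follows essentially the same route as the paper: verify face-closure of each $K^{t_m}$ via the fact that a union of abstract simplicial complexes is again one, and observe nesting directly from the monotone union. If anything, your face-closure argument is cleaner than the paper's slightly superfluous remark about intersections of simplices.
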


\begin{proof}
    For each $s\ge t_1$, $L^s:=\bigcup_{C \in\theta(s)} \Delta C$ fulfils the properties of an abstract simplicial complex because it is the disjoint union of solid simplices. Hence, $K^t=\bigcup_{s\le t}L^s$ is an abstract simplicial complex for each $t\ge t_1$ as the union of abstract simplicial complexes. By construction, %
    $K^{t} \subseteq K^{t^\prime}$ for $t\leq t^\prime$, and thus $(K^{t})_{t\ge t_1}$ is a filtration.
\end{proof}

\begin{remark}
    The dimension $\dim(K)$ of $K:=\bigcup_{t\ge t_1}K^t$ is given by the largest cluster in the sequence of partitions $\theta$, i.e., $\dim(K)=\max_{t_1\le t}\max_{C\in\theta(t)}(\#C-1)$. An input containing large clusters %
    can thus lead to a high number of simplices in $K$ (since faces of simplices must be included). 
    However, we show in Proposition~\ref{prop:restrict_true_overlaps} that the MCF can be restricted to the set of ``true overlaps'' $\bar{X}\subseteq X$ (Equation~\ref{eq:true_overlaps}), which is computationally advantageous in the case of sequences with a high degree of hierarchy. Furthermore, in Section~\ref{Sec:NerveComplex} we introduce an equivalent construction of the MCF based on nerve complexes~\autocite{matousekUsingBorsukUlamTheorem2003} which can be computationally advantageous in the case of sequences with large (but few) clusters.
\end{remark}

\begin{remark}
    The construction of the MCF (Equation~\ref{eq:MCF_construction}) is flexible and allows for more general inputs such as: a sequence of sub-partitions (where a \textit{sub-partition} of $X$ is a family of disjoint subsets of $X$ that need not cover $X$); a sequence of soft partitions (where a \textit{soft} partition of $X$ is a family of not necessarily disjoint subsets of $X$ that cover $X$); or a sequence of soft sub-partitions. However, we leave these extensions for future work and develop our theoretical analysis below for the case of a sequence of partitions $\theta:[t_1,\infty)\rightarrow\Pi_X, t \mapsto \theta(t)$ as defined in Equation~\eqref{eq:scale_function}, where each partition $\theta(t)$ of $X$ is a family of disjoint subsets of $X$ that cover $X$.
\end{remark}

\begin{remark}\label{rem:filtration_critical_values}
    The filtration $\mathcal{M}=(K^t)_{t\ge t_1}$ only changes finitely many times at the critical values $t_1<t_2<...<t_M$ of the piece-wise constant scale function $\theta(t)$. 
    This also implies that $K=\bigcup_{t\ge t_1}K^t$ is given by $K^{t_M}$.
\end{remark}

We illustrate the MCF construction with a small running example, to which we will refer throughout.

\begin{example}[Running example]\label{ex:MCF_illustration}
    Consider the set $X=\{x_1,x_2,x_3\}$ and a sequence of partitions $\theta:[1,\infty)\rightarrow\Pi_X, t \mapsto \theta(t)$ with five critical values $\theta(1)=\mathcal{P}^1=\{\{x_1\},\{x_2\},\{x_3\}\}$, $\theta(2)=\mathcal{P}^2=\{\{x_1,x_2\},\{x_3\}\}$, $\theta(3)=\mathcal{P}^3=\{\{x_1\},\{x_2,x_3\}\}$, $\theta(4)=\mathcal{P}^4=\{\{x_1,x_3\},\{x_2\}\}$ and $\theta(5)=\mathcal{P}^5=\{\{x_1,x_2,x_3\}\}$. Then the filtration $(K^t)_{1\le t \le 5}$ defined by the MCF is given by the abstract simplicial complexes $K^1=\{[x_1],[x_2],[x_3]\}$, $K^2=\{[x_1],[x_2],[x_3],[x_1,x_2]\}$, $K^3=\{[x_1],[x_2],[x_3],[x_1,x_2],[x_2,x_3]\}$, $K^4=\{[x_1],[x_2],[x_3],$ $[x_1,x_2],$ $[x_2,x_3],$ $[x_1,x_3]\}$ and $K^5=\{[x_1],[x_2],[x_3],[x_1,x_2],[x_2,x_3],[x_1,x_3],[x_1,x_2,x_3]\}=2^X$. See Figure~\ref{fig:illustration_1_conflict} for an illustration.
\end{example}

\begin{figure}[ht]
    \centering
    \includegraphics[width=\textwidth]{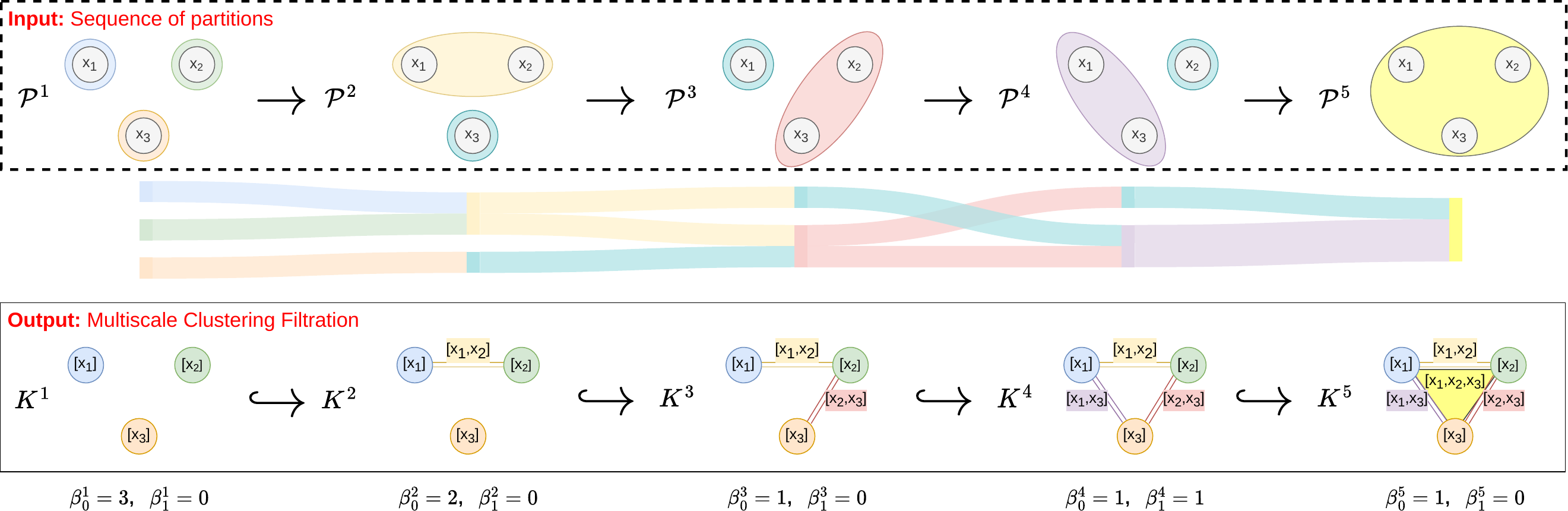}
    \caption{{MCF construction.} Illustration of the MCF on a set of three points $X=\{x_1,x_2,x_3\}$ as per Example~\ref{ex:MCF_illustration}. The top row shows %
    the non-hierarchical sequence of partitions $\theta:[1,\infty)\rightarrow\Pi_X, t \mapsto \theta(t)$ (and a corresponding Sankey diagram) which emerges from evaluation at the critical values $\theta(t_i)=\mathcal{P}^i$ for $t_i:=i$, $i=1,..5$. The bottom row shows the filtered simplicial complex $(K^t)_{1\le t\le 5}$. The first non-hierarchy in the sequence of partitions appears at filtration index $t=3$, when the number of clusters in $\theta(3)$ is for the first time larger than the number of connected components in $K^3$, leading to a so-called \textit{0-conflict}, see Example~\ref{Ex:running_hierarchy}. At filtration index $t=4$, the three elements $x_1$, $x_2$ and $x_3$ are in a so-called \textit{1-conflict} emerging of three different cluster assignments that produce a non-bounding 1-cycle $[x_1,x_2]+[x_2,x_3]+[x_3,x_1]$, see Example~\ref{ex:1_conflict}.
    Both kinds of conflicts are resolved at index $t=5$ when the 2-simplex $[x_1,x_2,x_3]$ is added to $K^5$, making $\theta(5)$ a conflict-resolving partition, see Remark~\ref{rem:heuristic_gaps}.
    }
    \label{fig:illustration_1_conflict}
\end{figure}

\begin{remark}[Ordering of sequence of partitions]
\label{rem:ordering}
Example~\ref{ex:MCF_illustration} illustrates the key role played by the ordering in the sequence of partitions: swapping partitions $\mathcal{P}^5$ and $\mathcal{P}^1$ would yield $K^t=2^X$ for $t\ge 1$ and the filtration cannot incorporate additional information from other partitions.  Hence, MCF is designed to encode sequences of partitions $\theta:[1,\infty)\rightarrow\Pi_X$ 
with an ordering that reflects a notion of coarsening of the partitions across scales from fine to coarse, as is usually obtained in hierarchical and non-hierarchical multiscale clustering algorithms 
~\autocite{lambiotteLaplacianDynamicsMultiscale2009,delvenneStabilityGraphCommunities2010, lambiotteRandomWalksMarkov2014,schaubMarkovDynamicsZooming2012}. A simple heuristic is to order the partitions in decreasing order of their number of clusters, i.e., in decreasing order of the dimension of the maximal simplices. 
Another approach would be to re-order the sequence of partitions based on properties of the MCF as presented in Remark~\ref{rem:Ordering_PH}.
\end{remark}

\begin{remark}\label{rem:2-determined}
Example~\ref{ex:MCF_illustration} shows that the MCF is generally not 2-determined: although every pair of the set $\{x_1,x_2,x_3\}$ is a 1-simplex in $K^4$, the 2-simplex $[x_1,x_2,x_3]$ is not included in $K^4$. This implies that the MCF %
cannot be constructed as a VR filtration (Equation~\ref{eq:VR_point_cloud}) 
or a  clique complex filtration (Equation~\ref{eq:clique_filtration}), both of which are 2-determined. 
However, we show in Section~\ref{sec:CAG_hierarchical} that the MCF reduces to a VR filtration if $\theta$ %
is strictly hierarchical.
\end{remark}

Since the MCF is a well-defined filtration of abstract simplicial complexes, we can analyse its structure with PH and the $k$-dimensional PDs $\Dgm_k(\mathcal{M})$ for $0\le k \le \dim(K)$. 

\begin{remark}[Stability of MCF]\label{rem:stability_mcf}
To show that the MCF $\mathcal{M}$ is a stable filtration, one can equivalently define $\mathcal{M}$ from the sub-level sets of the filtration function $f_\mathcal{M}:K \rightarrow \mathbbm{R}$ given by  
    $$f_{\mathcal{M}}(\sigma) = \min\{t\ge t_1\; |\; \exists\; C \in \theta(t):\; \sigma\subseteq C\},$$
which is the smallest scale $t\ge t_1$ where all the points $\sigma\subseteq X$ are contained in the same cluster in $\theta(t)$. Using a stability theorem for the Wasserstein distance \cite[Theorem 4.8]{skrabaWassersteinStabilityPersistence2022} then yields that the $k$-dimensional PD $\Dgm_k(\mathcal{M})$ of the MCF is \textit{stable} to small perturbations in $f_{\mathcal{M}}$. In particular, consider two sequences of partitions with scale functions $\theta:[t_1,\infty)\rightarrow \Pi_X$ and $\tilde{\theta}:[\tilde{t}_1,\infty)\rightarrow \Pi_X$ and corresponding filtrations $\mathcal{M}=(K^{t})_{t\ge t_1}$ and $\Tilde{\mathcal{M}}=(\tilde{K}^{t})_{t\ge \tilde{t}_1}$. If we assume $K:=\bigcup_{t\ge t_1}K^t = \bigcup_{t\ge \tilde{t}_1}\tilde{K}^t$, it follows directly %
that for every $0\le k \le \dim(K)$,
\begin{equation}\label{eq:stability_MCF}
    d_{W,q}\left(\Dgm_k(\mathcal{M}),\Dgm_k(\Tilde{\mathcal{M}})\right)^q \; \le\; \sum_{\dim(\sigma)\in\{k,k+1\}} |f_{\mathcal{M}}(\sigma) - f_{\mathcal{\Tilde{M}}}(\sigma) |^q,
\end{equation}
where $f_{\mathcal{M}}:K \rightarrow \mathbbm{R}$ and $f_{\mathcal{\Tilde{M}}}:K \rightarrow \mathbbm{R}$ are the filtration functions of $\mathcal{M}$ and $\mathcal{\Tilde{M}}$ respectively and $d_{W,q}$ is the $q$-th Wasserstein distance in Equation~\eqref{eq:Wasserstein_distance}. Examining under what circumstances the right-hand side in \eqref{eq:stability_MCF} is small is an interesting problem that is left for future work, but we have a further characterisation for dimension $k=0$.  
\end{remark}

\begin{definition}[Matrix of first contacts]
    For a sequence of partitions $\theta$, let us define the $N\times N$ matrix $D_\theta$, where each element $D_{\theta}[x,y]$ is the first scale $t$ at which $x, y\in X$ are part of the same cluster %
    $\theta(t)$: 
    \begin{equation}\label{eq:adjacency_CAG}
        D_{\theta}[x,y] = \min \left\{t\ge t_1 |\; \exists C \in\theta(t): x,y\in C\right\},
    \end{equation}
and we define $\min\emptyset = 0$.
\end{definition}

\begin{proposition}[Stability for zero-dimension]
Following the notation in
Remark~\ref{rem:stability_mcf} and under the assumption that $K:=\bigcup_{t\ge t_1}K^t = \bigcup_{t\ge \tilde{t}_1}\tilde{K}^t$, the inequality~\eqref{eq:stability_MCF} for $k=0$ becomes  
\begin{equation}\label{eq:stability_0_dim}
d_{W,q}\left(\Dgm_0(\mathcal{M}),\Dgm_0(\Tilde{\mathcal{M}})\right)^q \; \le\; N|t_1-\tilde{t}_1|^q+||D_\theta-D_{\tilde{\theta}}||_q^q.
\end{equation}
\end{proposition}

\begin{proof}
This follows directly from %
the fact that 
$f_\mathcal{M}([x])=t_1$ for all $x\in X$ and $f_\mathcal{M}([x,y])=D_\theta[x,y]$ for all $x,y\in X$ (and similarly for $f_{\tilde{\mathcal{M}}}$).
\end{proof}

Therefore the zero-dimensional $\Dgm_0(\mathcal{M})$ and $\Dgm_0(\Tilde{\mathcal{M}})$ are close when $|t_1-\tilde{t}_1|$ is small and all pairs of points $x,y\in X$ merge in $\theta$ and $\tilde{\theta}$ at similar scales. These results indicate that, rather than comparing pairs of partitions, the MCF can be used to compare full sequences of partitions using the $q$-Wasserstein distance of their PDs. Our numerical experiments in Section~\ref{sec:NumericalExperiments} suggest that the Wasserstein distance between two PDs is small if their corresponding sequences of partitions have similar multiscale structures and levels of hierarchy. Note that inequalities~\eqref{eq:stability_MCF} and \eqref{eq:stability_0_dim} are related to Carlsson and Mémoli's stability theorem for single-linkage hierarchical clustering~\autocite{carlssonCharacterizationStabilityConvergence2010}, which is based on the Gromov-Hausdorff distance between the ultrametric spaces corresponding to two \textit{hierarchical} sequences of partitions; yet this inequality extends to non-hierarchical sequences of partitions for MCF. The properties of $D_\theta$ and stability of MCF for the hierarchical case are studied in more detail in Section~\ref{sec:CAG_hierarchical}.

\subsection{Zero-dimensional PH of MCF as a Measure of Hierarchy}\label{Sec:MCF_zero_dim}

We start by considering the zero-dimensional PH of the MCF. As all vertices in the MCF are born at the same filtration index $t_1$, we focus on the %
homology groups $H_0^t$ of $K^t$, $t\ge t_1$. First, we characterise the level of hierarchy in a sequence of partitions.
\begin{definition}[Fractured and non-fractured partitions]
Let $\theta:[t_1,\infty)\rightarrow \Pi_X, t \mapsto \theta(t)$ be a sequence of partitions.
    We say that the partition $\theta(t)$ is \textit{non-fractured} if  for all $t_1\le s\le t$ the partitions $\theta(s)$ are refinements of $\theta(t)$, i.e., $\theta(s)\le\theta(t)$. Otherwise, $\theta(t)$ is \textit{fractured}.
\end{definition}
Note that a sequence of partitions is hierarchical if and only if its partitions $\theta(t)$ are non-fractured for all $t$. It turns out that the level of hierarchy in the sequence of partitions can be quantified by comparing the zero-dimensional Betti number $\beta_0^t$ of the simplicial complex $K^t$ to the number of clusters $\#\theta(t)$ at scale $t$.
\begin{proposition}\label{prop:0-dim-Betti}
    For each $t\ge t_1$, %
    $\beta_0^t$ fulfils the following properties:
    \begin{enumerate}
        \item $\beta_0^t\le \min_{s\le t} \#\theta(s)$
        \item $\beta_0^t=\#\theta(t)$ if and only if $\theta(t)$ is non-fractured
    \end{enumerate}
\end{proposition}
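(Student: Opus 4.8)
The plan is to reduce the computation of $\beta_0^t$ to a purely combinatorial statement about the partition lattice $(\Pi_X,\le)$. Since $\beta_0^t$ counts the connected components of $K^t$, and connectivity is determined entirely by the $1$-skeleton, I would first identify which edges belong to $K^t$. By construction an edge $[x_i,x_j]$ lies in $K^t$ if and only if it is a face of some solid simplex $\Delta C$ with $C\in\mathcal{P}^s$ for some $s\le t$, which happens exactly when $x_i$ and $x_j$ are co-clustered at some scale $s\le t$, i.e. $x_i\sim_s x_j$. Consequently, two vertices lie in the same connected component of $K^t$ precisely when they are joined by a chain $x_i=z_0,z_1,\dots,z_k=x_j$ in which each consecutive pair is co-clustered at some (possibly varying) scale $s\le t$.

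This chain condition is exactly the transitive closure of $\bigcup_{s\le t}\sim_s$, which is the equivalence relation of the join $\bigvee_{s\le t}\mathcal{P}^s$ in the partition lattice, namely the finest partition that is coarser than every $\mathcal{P}^s$ with $s\le t$ (equivalently, their least upper bound under $\le$). The central identity I would establish is therefore
\begin{equation*}
  \beta_0^t \;=\; \#\!\left( \bigvee_{s\le t}\mathcal{P}^s \right).
\end{equation*}
Both inclusions are elementary: co-clustered vertices span a clique in $K^t$ and are thus connected, while any edge-path in $K^t$ unwinds, edge by edge, into a chain of co-clusterings witnessing membership in a common join-cluster.

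With this identity in hand, both claims follow from standard facts about refinement. For (i), the join $\bigvee_{s\le t}\mathcal{P}^s$ is coarser than each individual $\mathcal{P}^s$ with $s\le t$, and a coarser partition has no more clusters than a finer one; hence $\beta_0^t\le\#\mathcal{P}^s$ for every $s\le t$, and taking the minimum gives $\beta_0^t\le\min_{s\le t}\#\mathcal{P}^s$. For (ii), observe that $\mathcal{P}^t\le\bigvee_{s\le t}\mathcal{P}^s$ always holds, so $\#\mathcal{P}^t\ge\beta_0^t$ with equality if and only if $\mathcal{P}^t=\bigvee_{s\le t}\mathcal{P}^s$. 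Since the join is the least upper bound, this equality holds exactly when $\mathcal{P}^t$ is itself an upper bound of all $\mathcal{P}^s$ with $s\le t$, i.e. when $\mathcal{P}^s\le\mathcal{P}^t$ for all $s\le t$, which is precisely the definition of $\mathcal{P}^t$ being non-fractured.

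The only genuinely careful step is the set-level identification of connected components with join-clusters; everything afterwards is lattice bookkeeping. I expect the main obstacle to be stating the transitive-closure argument cleanly, in particular making explicit that $\beta_0^t$ sees only the $1$-skeleton, so that the higher-dimensional simplices of the solid clusters contribute nothing beyond the cliques they already induce, and confirming that the scales $s$ along a connecting path may differ between consecutive edges. This last point is exactly why the \emph{join} (a transitive closure), rather than any single $\sim_s$, governs $\beta_0^t$.
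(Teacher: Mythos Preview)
Your proof is correct and rests on the same underlying observation as the paper's: connected components of $K^t$ are governed by the $1$-skeleton, and an edge $[x,y]$ appears exactly when $x\sim_s y$ for some $s\le t$. The difference is one of packaging. The paper argues each part directly---for (i) it notes that each cluster of any $\mathcal{P}^s$ is a solid (hence connected) simplex in $K^t$, so the number of components is at most $\#\mathcal{P}^s$; for (ii) it runs a short contrapositive showing that a single witness $x\sim_s y$, $x\not\sim_t y$ already merges two clusters of $\mathcal{P}^t$ into one component. You instead first isolate the single identity $\beta_0^t=\#\bigl(\bigvee_{s\le t}\mathcal{P}^s\bigr)$ and then read off both claims from the order theory of $(\Pi_X,\le)$. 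Your route is slightly more conceptual and makes the equivalence in (ii) a one-liner (refinement plus equal cardinality forces equality of partitions, and the join equals $\mathcal{P}^t$ iff $\mathcal{P}^t$ is an upper bound); the paper's route avoids introducing the join but is a touch more ad hoc. Substantively the arguments coincide.
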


\begin{proof}
    1) The 0-th Betti number $\beta^t_0$ equals the number of connected components in the simplicial complex $K^t$. The complex $K^t$ contains the clusters of partitions $\theta(s)$ for $s\le t$ as solid simplices, and the number of these clusters is given by $\#\theta(s)$. Hence $K^t$ has at most $\min_{s\le t} \#\theta(s)$ connected components, i.e., $\beta^t_0\le \min_{s\le t} \#\theta(s)$.
    2) ``$\Longleftarrow$'' Assume first that the partition $\theta(t)$ is non-fractured. This means that the clusters of $\theta(s)$ are nested within the clusters of $\theta(t)$ for all $s \le t$ and so the maximally disjoint simplices of $K^t$ are given by the solid simplices corresponding to the clusters of $\theta(t)$, implying $\beta^t_0 = \#\theta(t)$.
    ``$\Longrightarrow$'' Consider the case $\beta^t_0 = \#\theta(t)$. Assume that $\theta(t)$ is fractured, i.e., there exist $ s < t$ and $x,y \in X$ such that $x\sim_{\theta(s)}y$ but $x\not\sim_{\theta(t)}y$. Then the points $x,y\in X$ are path-connected in $K^s$ and because $K^s\subseteq K^t$, they are also path-connected in $K^t$. This implies that the simplices corresponding to the clusters of $x$ and $y$ are in the same connected component. Hence, the number of clusters at $t$ is larger than the number of connected components, i.e., $\beta^t_0 < \#\theta(t)$. This is in contradiction to $\beta^t_0 = \#\theta(t)$  and so $\theta(t)$ must be non-fractured.
\end{proof}
The number of clusters $\#\theta(t)$ is thus an upper bound for the Betti curve $\beta_0^t$ and this motivates the following definition.
\begin{definition}[Persistent hierarchy]
    For $t\ge t_1$, the \textit{persistent hierarchy} is defined as
    \begin{equation}\label{eq:persistent_hierachy}
        0 \le h(t):=\frac{\beta_0^t}{\#\theta(t)}\le 1.
    \end{equation}
\end{definition}
The persistent hierarchy $h(t)$ is a piecewise-constant left-continuous function that measures the degree to which the clusters in partitions up to scale $t$ are nested within the clusters of partition $\theta(t)$; hence high values of $h(t)$ indicate a high level of hierarchy in the sequence of partitions. Note that $h(t_1)=1$ by construction and that always $1/N\le h(t)$ for all $t\ge t_1$. We can use the persistent hierarchy to formulate a necessary and sufficient condition for the hierarchy of a sequence of partitions.
\begin{corollary}\label{cor:persistent_hierarchy}
    $h(t)=1$ for all $t\ge t_1$ if and only if %
    $\theta$ %
    is strictly hierarchical.
\end{corollary}
\begin{proof}
    ``$\Longrightarrow$'' For $t\ge t_1$, $h(t)= 1$ implies that $\theta(t)$ is non-fractured by Proposition~\ref{prop:0-dim-Betti} 2). Hence, the clusters of partition $\theta(s)$ are nested within the clusters of partition $\theta(t)$ for all $t_1\le s\le t$ and this means that the sequence of partitions $\theta:[t_1,\infty)\rightarrow \Pi_X, t \mapsto \theta(t)$ is strictly hierarchical.
    ``$\Longleftarrow$'' A strictly hierarchical sequence implies that $\theta(t)$ is non-fractured for all $t\ge t_1$ and hence $h(t)\equiv 1$ by Proposition~\ref{prop:0-dim-Betti} 2).
\end{proof}

The previous results show that $h(t)$ measures to what degree the sequence of partitions $\theta(t)$ deviates from a perfectly hierarchical sequence. We also define $\Bar{h}$, the \textit{average persistent hierarchy} of the whole sequence of partitions:
\begin{equation}\label{eq:average_persistent_hierachy}
    \Bar{h} := \frac{1}{t_M-t_1} \int_{t_1}^{t_M}h(t)\;dt = \frac{1}{t_M-t_1}\sum_{m=1}^{M-1} h(t_m)(t_{m+1}-t_m),
\end{equation}
Clearly, a strictly hierarchical sequence has $\Bar{h}=1$, but our running example illustrates how quasi-hierarchical sequences of partitions still observe high values of $\Bar{h}$.

\begin{example}[Running example]\label{Ex:running_hierarchy}
    Let $(K^t)_{1\le t \le 5}$ be the MCF defined in Example~\ref{ex:MCF_illustration}. Then the persistent hierarchy is given by $h(1)=h(2)=1$, $h(3)=h(4)=0.5$ and $h(5)=1$. Note that the drop in persistent hierarchy at $t=3$ indicates a violation of hierarchy induced by a conflict between cluster assignments. We call this type of conflict, which is apparent in the zero-dimensional PH, a \textit{0-conflict}. Yet the high average persistent hierarchy $\Bar{h}=0.75$ indicates the presence of quasi-hierarchy in the sequence. As discussed in Remark~\ref{rem:ordering}, the ordering of the sequence is crucial and swapping the partitions $\mathcal{P}^5$ and $\mathcal{P}^1$ would lead to a reduced average persistent hierarchy $\Bar{h}=0.58$.
\end{example}

\begin{remark}\label{rem:Ordering_PH}
    Let the critical values $t_1<t_2<...<t_M$ of $\theta(t)$ be given by integers $1<2<...<M$. One can use the persistent hierarchy to determine a maximally hierarchical ordering of the sequence of partitions. A permutation $\pi$ of $\{1,2,...,M\}$ such that the average persistent hierarchy $\Bar{h}$ of the MCF of the sequence $\mathcal{P}^{\pi(1)},\mathcal{P}^{\pi(2)},...,\mathcal{P}^{\pi(M)}$ is maximal leads to such a maximally hierarchical ordering.
\end{remark}

\subsection{Higher-dimensional PH of MCF as a Measure of Conflict Resolution%
}\label{sec:higher_dim_ph}

For simplicity, we assume in this section that the PH is computed over %
$\mathbbm{Z}_2$. We show now that the higher-dimensional PH tracks the emergence and resolution of cluster assignment conflicts across the sequence of partitions. To illustrate this point, we use our running example.

\begin{example}[Running example]\label{ex:1_conflict}
    In %
    Example~\ref{ex:MCF_illustration}, the three elements $x_1$, $x_2$ and $x_3$ are in a pairwise conflict at $t=4$ because each pair of elements has been assigned to a common cluster but all three elements have never been assigned to the same cluster in partitions up to index $t=4$, i.e., the simplicial complex $K^4$ contains the 1-simplices $[x_1,x_2]$, $[x_2,x_3]$ and $[x_3,x_1]$ but is missing the 2-simplex $[x_1,x_2,x_3]$. Hence, the 1-chain $[x_1,x_2]+[x_2,x_3]+[x_3,x_1]$ is a non-bounding 1-cycle that corresponds to the generator of the one-dimensional homology group $H^4_1=\mathbbm{Z}$. We call this type of conflict, which is apparent in the one-dimensional PH, a \textit{1-conflict}. Note that the 1-conflict is resolved at index $t=5$ because the three elements $x_1$, $x_2$ and $x_3$ are assigned to the same cluster in partition $\mathcal{P}^5$ and so the simplex $[x_1,x_2,x_3]$ is finally added to the complex such that there are no more non-bounding 1-cycles and $H^5_1=0$. 
\end{example}

This example motivates an interpretation of the cycle-, boundary- and homology groups of the MCF PH in terms of cluster assignment conflicts.
\begin{remark}[$k$-conflicts]\label{rem:k-conflicts}
    For $t\ge t_1$ and $1\le k \le \dim(K)$, we interpret the elements of the cycle group $Z_k^t$ as \textit{potential $k$-conflicts} and the elements of the boundary group $B_k^t$ as \textit{resolved $k$-conflicts}. We further interpret the classes of the PH group $H^{t,p}_k$ (Equation~\ref{eq:persistent_homology_group}), $p\ge0$, as equivalence classes of \textit{true $k$-conflicts} that have not been resolved until filtration index $t+p$, with birth and death times of true conflicts corresponding to the %
    emergence and resolution of the conflict. The total number of unresolved true $k$-conflicts at index $t$ is given by %
    $\beta_k^t$.
\end{remark}
It is clear that $k$-conflicts for $1\le k \le \dim(K)$ only emerge in non-hierarchical sequences of partitions. In fact, we can show that only those vertices that lie in ``true overlaps'' between clusters contribute to the higher-dimensional PH of the MCF.

\begin{definition}[True overlaps]
    For a sequence of partitions $\theta:[t_1,\infty)\rightarrow \Pi_X$ we define the \textit{set of true overlaps} $\bar{X}\subseteq X$ as
    \begin{equation}\label{eq:true_overlaps}
        \bar{X}:=\left\{x\in X\; |\; \exists\, t,t^\prime\ge t_1\; \exists\, C\in\theta(t)\; \exists\, C^\prime\in\theta(t^\prime): x \in C\cap C^\prime \land C \not\subset C^\prime \land C^\prime \not\subset C \right\}.
    \end{equation}
\end{definition}
Note that $\bar{X}$ is the empty set for a hierarchical sequence of partitions. We now define the restriction of a sequence of partitions and the MCF to a subset of $X$.

\begin{definition}[Restricted sequence of partitions and MCF]
    For a subset $Y\subseteq X$ we define the \textit{restricted sequence of partitions} $\theta|_Y(t)$ for $t\ge t_1$ as:
    \begin{equation*}
        \theta|_Y(t):=\{C\cap Y\; |\; C \in \theta(t)\}.
    \end{equation*}
    We also define the the restricted MCF $\mathcal{M}|_Y=(K^t|_Y)_{t\ge t_1}$ as the MCF constructed from $\theta|_Y$.
\end{definition}

The following proposition shows that we can restrict the MCF to the set of true overlaps $\bar{X}$ without changing the higher-dimensional PH.
\begin{proposition}\label{prop:restrict_true_overlaps}
    Let $\bar{X}\subseteq X$ be the set of true overlaps for a sequence of partitions $\theta:[t_1,\infty)\rightarrow \Pi_X$. Then for all $1\le k \le \dim(K)$, $t\ge t_1$ and $p\ge 0$ we have
    \begin{equation*}
        H^p_k(K^t) \cong H^p_k(K|_{\bar{X}}^t),
    \end{equation*}
    where $\cong$ denotes group isomorphism.
\end{proposition}

\begin{proof}
Let $x_0\in X\setminus\bar{X}$, then the largest cluster that contains $x_0$ up to scale $t\ge t_1$ is given by 
    $C_{x_0}(t):=\bigcup_{s \le t}\bigcup_{\substack{C\in\theta(s),\,%
    x_0 \in C}} C$.
Note that $x_0\in\sigma$ for $\sigma\in K^t$ implies $\sigma\subseteq \triangle C_{x_0}(t)\subseteq K^t$. We define simplicial maps $f:K^t\rightarrow K|_{X\setminus\{x_0\}}^t,\sigma\mapsto\sigma\setminus\{x_0\}$ and $h:K^{t+p}\rightarrow K|_{X\setminus\{x_0\}}^{t+p},\sigma\mapsto\sigma\setminus\{x_0\}$, which commute with the canonical inclusions $K^t \hookrightarrow K^{t+p}$ and $K|_{X\setminus\{x_0\}}^t \hookrightarrow K|_{X\setminus\{x_0\}}^{t+p}$. In the following, we show that both $f$ and $h$ induce isomorphisms $f^*$ and $h^*$ between the higher-dimensional homology groups $H_k(\cdot)$, $1\le k \le \dim(K)$ leading to the commutative diagram:
    \[\begin{tikzcd}
            H_k(K^t) \arrow{r} \arrow[swap]{d}{f^*} & H_k(K^{t+p}) \arrow{d}{h^*} \\
            H_k(K|_{X\setminus\{x_0\}}^t) \arrow{r} & H_k(K|_{X\setminus\{x_0\}}^{t+p})
        \end{tikzcd}
    \]
This yields  $H_k^p(K^t) \cong  H_k^p(K|_{X\setminus\{x_0\}}^t)$ %
and by induction we get $H^p_k(K^t) \cong H^p_k(K|_{\bar{X}}^t)$.

\textbf{Case 1:} If there exists $y\in C_{x_0}(t)$ with $y\neq x$, we can interpret both $f$ and $h$ as an elementary collapse of $x_0$ to $y$. Recall that for a simplex $\tau$, $\Lk \tau$ refers to its link as defined in Equation~\eqref{eq:link_sim_complex}. It is easy to see that $f$ fulfills the so-called \textit{link condition}~\autocite{attaliEfficientDataStructure2011} for the pair $(x_0,y)$ because $$\Lk [x_0] \cap \Lk [y] = S \cap \triangle(C_{x_0}(t)\setminus\{x_0\}) = \Lk [x_0,y]$$ in $K^t$ for some $S\subseteq \triangle(C_{x_0}(t)\setminus\{y\})$, and similarly $h$ also fulfills the link condition. The \textit{link condition theorem}~\cite[Theorem 2]{attaliEfficientDataStructure2011} then implies that $f^*$ and $h^*$ are isomorphisms.

\textbf{Case 2:} If $C_{x_0}(t)=C_{x_0}(t+p)=\{x_0\}$, then  $K^t=K|_{X\setminus\{x_0\}}^t \cup \{[x_0]\}$ and $K^{t+p}=K|_{X\setminus\{x_0\}}^{t+p} \cup \{[x_0]\}$ and so $f^*$ and $h^*$ are clearly isomorphisms for the higher-dimensional homology groups.

\textbf{Case 3:} If $C_{x_0}(t)=\{x_0\}$ but there exists $y\in C_{x_0}(t+p)$ with $y\neq x$, then $f^*$ is an isomorphism as argued in case 2 and $h^*$ is an isomorphism as argued in case 1.
\end{proof}

\begin{remark}
    When a sequence of partitions has only a few violations of hierarchy, then $\bar{X}$ is much smaller than $X$ and restricting the computation of the higher-dimensional PH of the MCF to the set of true overlaps $\bar{X}$ can be computationally beneficial.
\end{remark}

The following corollary states that hierarchical sequences of partitions lead to trivial higher-dimensional PH groups, and follows easily from Proposition~\ref{prop:restrict_true_overlaps}.  (For an alternative direct proof, see SM.)

\begin{corollary}\label{cor:higher_dim_ph_0_when_hierarchical}
    If the sequence of partitions $\theta:[t_1,\infty)\rightarrow \Pi_X$ is strictly hierarchical, then $H^{t,p}_k=0$ for all $1\le k \le \dim(K)$, $t\ge t_1$ and $p\ge 0$. 
\end{corollary}

For non-hierarchical sequences of partition, the $k$-conflicts emerge from intersection patterns between clusters as stated in the next proposition. 

\begin{proposition}\label{prop:kconflicts-intersections}
    If $\beta_k^t > 0$ for some $t\ge t_1$ and $1 \le k \le \dim(K)$, there exist at least $k+1$ mutually distinct clusters $C_1,...,C_{k+1}$ in the sequence of partitions $\theta:[t_1,t]\rightarrow\Pi_X$ such that 
    \begin{equation*}
        \bigcap_{i =1}^{k+1} C_i = \emptyset.
    \end{equation*}
\end{proposition}

\begin{proof}
See Section~\ref{Sec:NerveComplex}, where we use the characterisation of the PH of the MCF through an alternative but equivalent nerve complex construction based on cluster intersection patterns. 
\end{proof}

\begin{remark}\label{rem:conflict_parties}
The previous proposition shows that the dimension %
of a $k$-conflict reveals information about the number of clusters involved in a cluster assignment conflict. In particular, $k$-conflicts for larger $k$ require complicated intersection patterns involving more ``conflict parties'' (i.e., clusters from different partitions). %
Future work will aim to derive stricter characterisations of intersection patterns leading to $k$-conflicts.
\end{remark}

In non-hierarchical sequences of partitions, the birth and death times of higher-dimensional homology classes can be used to trace the emergence and resolution of $k$-conflicts %
across scales. Recall from Section~\ref{sec:background_ph} that the number of $k$-dimensional homology classes with birth time $s\ge t_1$ and death time $t\ge s$ is given by  $\mu_k^{s,t}$, the multiplicity of point $(s,t)$ in the $k$-dimensional PD. 
Using these multiplicities allows us to quantify how many $k$-conflicts are created or resolved at a certain scale and based on Remark~\ref{rem:filtration_critical_values}, it is sufficient to consider only the critical values $t_1<t_2<...<t_M$. 

\begin{definition}[Conflict-creating and conflict-resolving partitions]
    For $1\le k \le \dim(K)$ we say that a partition $\theta(t_m)$ is \textit{$k$-conflict-creating} (\textit{$k$-conflict-resolving})  if the number of independent $k$-dimensional classes that are born (die) at filtration index $t_m$ is larger than 0:
    \begin{align*}
    \textit{$k$-conflict-creating}: \quad b_k(t_m)&:=\sum_{\ell=m+1}^M\mu_k^{t_m,t_\ell}+\mu_k^{t_m,\infty}>0  \\
    \textit{$k$-conflict-resolving}: \quad   d_k(m)&:=\sum_{\ell=1}^{m-1}\mu_k^{t_\ell,t_m}>0.
    \end{align*}
\end{definition}

Of course, a partition can be both $k$-conflict-creating and $k$-conflict-resolving but, intuitively, a ``good partition'' resolves many $k$-conflicts while at the same time creating few new $k$-conflicts. This motivates the following definition of \textit{persistent $k$-conflict} measure.

\begin{definition}[Persistent $k$-conflict] 
    For dimension $1\le k \le \dim(K)$, the \textit{persistent $k$-conflict} at the critical value $t_m$, $m\le M$, is defined as
    \begin{equation*}
        c_k(t_m) := b_k(t_m) - d_k(t_m),
    \end{equation*}
    and the \textit{total persistent conflict} at $t_m$ is the sum 
    \begin{equation}\label{eq:persistent_conflict_total}
        c(t_m):=\sum_{k=1}^{\dim(K)} c_k(t_m).
    \end{equation}
\end{definition}

We now show that the persistent $k$-conflict $c_k(t_m)$ %
can be interpreted as the discrete derivative of the Betti curve $\beta_k^{t_m}$.

\begin{proposition}
    For all $1\le k\le \dim(K)$ we have:
    \begin{enumerate}
        \item $c_k(t_1)=b_k(t_1)$ and $c_k(t_m)=\Delta\beta^{t_{m-1}}_k:=\beta^{t_m}_k-\beta^{t_{m-1}}_k$ for  $2\le m\le M$, 
        \item $\beta_k^{t_m} = \sum_{\ell=1}^m c_k(t_\ell)$ for all  $m\le M$.
    \end{enumerate}
\end{proposition}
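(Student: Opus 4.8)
The plan is to reduce both identities to the Fundamental Lemma of Persistent Homology (Eq.~\eqref{eq:Fundamental_Lemma_PH}). Specialising that lemma to persistence parameter $p=0$, and using that $B_k^i\subseteq Z_k^i$ so that $\beta_k^{t_m,0}=\beta_k^{t_m}$ is the ordinary Betti number of $K^{t_m}$, I would record the working identity
\begin{equation*}
\beta_k^{t_m}=\sum_{l\le m}\sum_{j>m}\mu_k^{t_l,t_j},
\end{equation*}
where the inner index $j$ ranges over the finite death indices $m+1,\dots,M$ together with the formal index $\infty$ (so that classes which never die are counted). Both parts of the proposition then become purely combinatorial identities in the multiplicities $\mu_k^{t_l,t_j}$, so the real content is careful index bookkeeping rather than any further topological input.

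For part i), I would first dispose of the base case: since $d_k(t_1)=\sum_{l=1}^{0}\mu_k^{t_l,t_1}$ is an empty sum and hence $0$, the definition $c_k(t_m)=b_k(t_m)-d_k(t_m)$ immediately yields $c_k(t_1)=b_k(t_1)$. For $2\le m\le M$ I would compute the forward difference $\beta_k^{t_m}-\beta_k^{t_{m-1}}$ by expanding both Betti numbers with the displayed identity. Splitting the sum for $\beta_k^{t_m}$ according to $l=m$ versus $l\le m-1$, and the sum for $\beta_k^{t_{m-1}}$ according to the death index $j=m$ versus $j\ge m+1$, the two ``bulk'' double sums over $\{\,l\le m-1,\ j\ge m+1\,\}$ cancel exactly. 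What survives is $\sum_{j>m}\mu_k^{t_m,t_j}$ (births at $t_m$, with the $\infty$ term retained) minus $\sum_{l<m}\mu_k^{t_l,t_m}$ (deaths at $t_m$), which by the definitions of $b_k$ and $d_k$ equals $b_k(t_m)-d_k(t_m)=c_k(t_m)$.

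Part ii) then follows by telescoping. Summing the identities from part i) gives
\begin{equation*}
\sum_{l=1}^{m}c_k(t_l)=b_k(t_1)+\sum_{l=2}^{m}\bigl(\beta_k^{t_l}-\beta_k^{t_{l-1}}\bigr)=b_k(t_1)+\beta_k^{t_m}-\beta_k^{t_1},
\end{equation*}
so the claim reduces to the single identity $b_k(t_1)=\beta_k^{t_1}$. This is again the Fundamental Lemma at $m=1$: no $k$-class can be born strictly before the first filtration index, so every class alive in $K^{t_1}$ is born there, and indeed $\beta_k^{t_1}=\sum_{j>1}\mu_k^{t_1,t_j}=b_k(t_1)$. (In fact $K^{t_1}$ is a disjoint union of solid simplices, whence $\beta_k^{t_1}=0=b_k(t_1)$ for $k\ge1$, but only the combinatorial identity is needed.)

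I expect the only genuine friction to lie in the boundary-term bookkeeping of part i) --- in particular keeping the infinite-death multiplicity $\mu_k^{t_m,\infty}$ correctly attached to the ``birth at $t_m$'' count $b_k(t_m)$ throughout the cancellation, and applying the empty-sum conventions consistently at the extremes $m=1$ and $j=M$. Once the index ranges are pinned down, everything is a mechanical cancellation followed by a telescoping sum, and no step requires more than the stated definitions together with Eq.~\eqref{eq:Fundamental_Lemma_PH}.
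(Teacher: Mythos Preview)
Your proof is correct and rests on exactly the same ingredient as the paper---the Fundamental Lemma together with elementary index bookkeeping---but you invert the order of the two parts. The paper first establishes ii) directly from Eq.~\eqref{eq:Fundamental_Lemma_PH} (observing that $\sum_{l\le m}b_k(t_l)-\sum_{l\le m}d_k(t_l)$ counts classes born at or before $t_m$ minus those dying at or before $t_m$, which is precisely $\beta_k^{t_m}$), and then reads off i) by subtracting the identity for $m-1$ from that for $m$. You instead prove the difference identity i) first and recover ii) by telescoping, which forces you to verify the residual $b_k(t_1)=\beta_k^{t_1}$ separately. Both routes are equally short; the paper's order avoids that extra base-case check, while yours makes the cancellation of the ``bulk'' double sums fully explicit.
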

\begin{proof}
     2) is a simple consequence of the Fundamental Lemma of PH (Equation~\ref{eq:Fundamental_Lemma_PH}). To prove 1), notice that $d_k(1)=0$ always, and so $c_k(1)=b_k(1)$. The rest follows directly from 2).
\end{proof}

The total persistent conflict can be extended to a piecewise-constant left-continuous function $c(t)$ 
on $t\ge t_1$: $c(t)=c(t_m)$ for $t\in[t_m,t_{m+1})$, $m=1,...,M-1$, and $c(t)=c(t_M)$ for $t\ge t_M$.

\begin{remark}[PD heuristics for conflict-resolving partitions]\label{rem:heuristic_gaps}
    Together with the higher-dimensional Betti curves $\beta_k^t$, the total persistent conflict $c(t)$ allows us to detect conflict-resolving partitions 
     $\theta(t^*)$ in the sequence of partitions.
    In summary, 
        $\theta(t^*)$ is a conflict-resolving partition if $t^*$ is located at a plateau after a dip in the persistent conflict $c(t)$, 
        or, similarly, %
        if $t^*$ falls on a gap between the birth-death tuples of the $k$-dimensional PDs along the birth- and death-dimension for all $1\le k\le \dim(K)$.
        Additionally, the total number of unresolved $k$-conflicts, $\beta_k^{t^*}$, should be low for all dimensions $1\le k\le \dim(K)$ at scale $t^*$ (see Remark~\ref{rem:k-conflicts}). 
\end{remark}

The PH of the MCF can thus capture multiscale structure in the sequence of partitions by detecting ``good'' conflict-resolving partitions. We illustrate these heuristics %
 with numerical experiments in the next section.

\section{Numerical Experiments}\label{sec:NumericalExperiments}

As an illustration, we apply the persistent homology of MCF to multiscale clusterings of stochastic block models (SBMs) with different planted structures.
We consider four random graph models ($N=270$ vertices): (i) an Erd\"os-Renyi (ER)  model~\autocite{erdosRandomGraphs1959,bollobasRandomGraphs2011}, i.e., an SBM with a single block with no hierarchy and no planted partition or scale; (ii) a single-scale SBM (sSBM)  with a planted partition into three equal-sized blocks~\autocite{hollandStochasticBlockmodelsFirst1983, karrerStochasticBlockmodelsCommunity2011}; (iii) a hierarchical multiscale SBM (mSBM) with a hierarchical structure of partitions nested at three scales with 27, nine and three equal-sized blocks, respectively~\autocite{peixotoHierarchicalBlockStructures2014,schaubHierarchicalCommunityStructure2023}; and (iv) a non-hierarchical multiscale SBM (nh-mSBM) with planted partitions at three scales with 27, five and three equal-sized blocks, respectively, yet not nested across scales. The model parameters are set so that the expected number of edges is 2,500 for all models. See SM for details, including the construction of nh-mSBM as a convex combination of multiple sSBMs (Equation~\ref{S_eq:SBM-convex-sum}).

\begin{figure}[!htb]
    \centering
    \includegraphics[width=.95\textwidth]{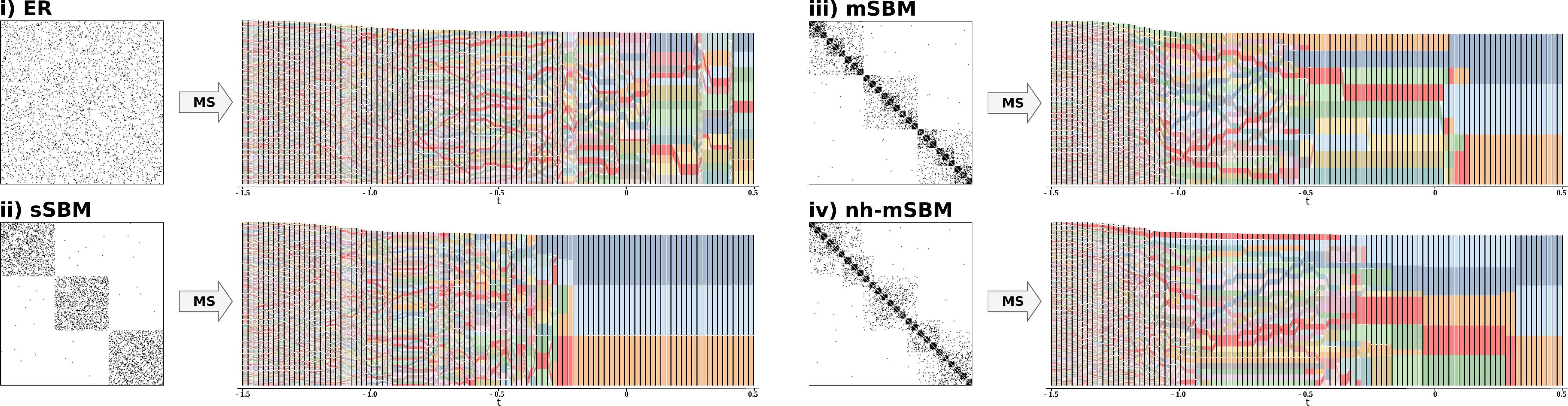}
    \caption{{Sankey diagrams for multiscale clusterings of realisations of different SBMs.} {i)-iv) For each SBM model (ER, sSBM, mSBM, nh-mSBM), we present the adjacency matrix of a realisation and the corresponding (non-hierarchical) sequence of partitions $\theta_i:[-1.5,0.5]\rightarrow \Pi_V, t\mapsto\theta_i(t)$ obtained with Markov Stability (MS) and visualised using a Sankey diagram.}}
    \label{fig:sankey_summary}
\end{figure}

We generate an ensemble of 200 random graph realisations %
from each of the four models to obtain 800 adjacency matrices $A_i$, $i=1,...,800$,  %
and use the Markov Stability (MS) algorithm for multiscale community detection~\autocite{lambiotteLaplacianDynamicsMultiscale2009,delvenneStabilityGraphCommunities2010, lambiotteRandomWalksMarkov2014,schaubMarkovDynamicsZooming2012,schindlerMultiscaleMobilityPatterns2023,arnaudonAlgorithm1044PyGenStability2024}
to obtain a non-hierarchical multiscale sequence of partitions $\theta_i:[-1.5,0.5]\rightarrow \Pi_V, t\mapsto\theta_i(t)$ from each adjacency matrix $A_i$.  %
In all cases, the finest partition $\theta_i(t_1)$ for $t_1:=-1.5$ %
consists of singletons and the partitions get coarser as the continuous scale parameter $t$ is increased. 
Figure~\ref{fig:sankey_summary} %
shows the MS sequence of partitions for one realisation of each of the four models. 
Note that 
the MS multiscale clustering of a realisation of a hierarchical model (like mSBM) is not strictly hierarchical due to random sampling effects, but the planted (hierarchical) structure of the model emerges as robust partitions through our MCF analysis.

\subsection{Results}
We construct the MCF $\mathcal{M}_i=(K_i^t)_{t\ge t_1}$
for the partition sequences $\theta_i$, $i=1,...,800$, %
and we compute their PH for dimensions $k\le 2$ using  the \texttt{GUDHI} software~\autocite{mariaGudhiLibrarySimplicial2014}. 

\begin{figure}[!htb]
    \centering
    \includegraphics[width=.95\textwidth]{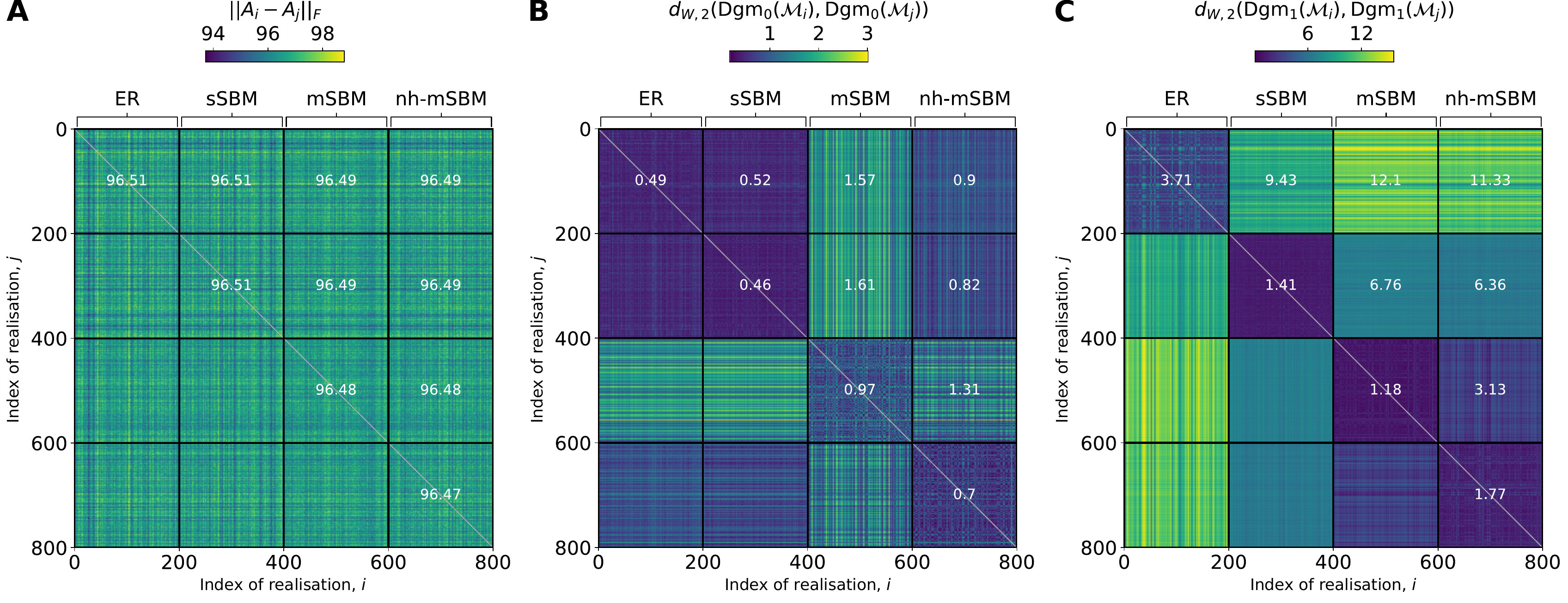}
    \caption{{Pairwise comparison between models.} {Pairwise distances between all $i=1,...,800$ model realisations of the four ensembles (ER, sSBM, mSBM and nh-mSBM):  
    (\textbf{A}) Frobenius distance of adjacency matrices $||A_i-A_j||_F$; (\textbf{B}) 2-Wasserstein distance of the zero-dimensional PDs 
    $d_{W,2}(\Dgm_0(\mathcal{M}_i, \Dgm_0(\mathcal{M}_j)$;  (\textbf{C}) 2-Wasserstein distance of the one-dimensional PDs $d_{W,2}(\Dgm_1(\mathcal{M}_i, \Dgm_1(\mathcal{M}_j)$. 
    Whereas the Frobenius distance (\textbf{A}) is not able to distinguish the models, the 2-Wasserstein distance of zero-dimensional PDs (\textbf{B}) distinguishes the models based on their hierarchical structure and the 2-Wasserstein distance of one-dimensional PDs (\textbf{C}) distinguishes the models based on their multiscale structure.
    }}
    \label{fig:model_comparison}
\end{figure}

First, we measure the similarity of the MCF PDs within and across models. Figure~\ref{fig:model_comparison} \textbf{B}-\textbf{C} shows the pairwise 2-Wasserstein distances of the zero- and one-dimensional PDs (Equation~\ref{eq:Wasserstein_distance} between all sequences of partitions. 
For both cases, we observe that the mean pairwise 2-Wasserstein distance within each model is significantly smaller than the mean pairwise distance to PDs in any other model ($p<0.0001$, Wilcoxon signed-rank~\autocite{wilcoxonIndividualComparisonsRanking1945} test with Benjamini-Yekutieli correction~\autocite{benjaminiControlFalseDiscovery2001}). Hence the MCF captures the differences in the level of hierarchy and the presence of multiscale structure in the sequences of partitions from the four models (ER, sSBM, mSBM, nh-mSBM).  In contrast, the pairwise Frobenius distance between adjacency matrices $A_i$ cannot distinguish across models  
($p>0.5$, Wilcoxon signed-rank test with Benjamini-Yekutieli correction), as seen in Figure~\ref{fig:model_comparison}\textbf{A}. %

\begin{figure}[!htb]
    \centering
    \includegraphics[width=0.95\textwidth]{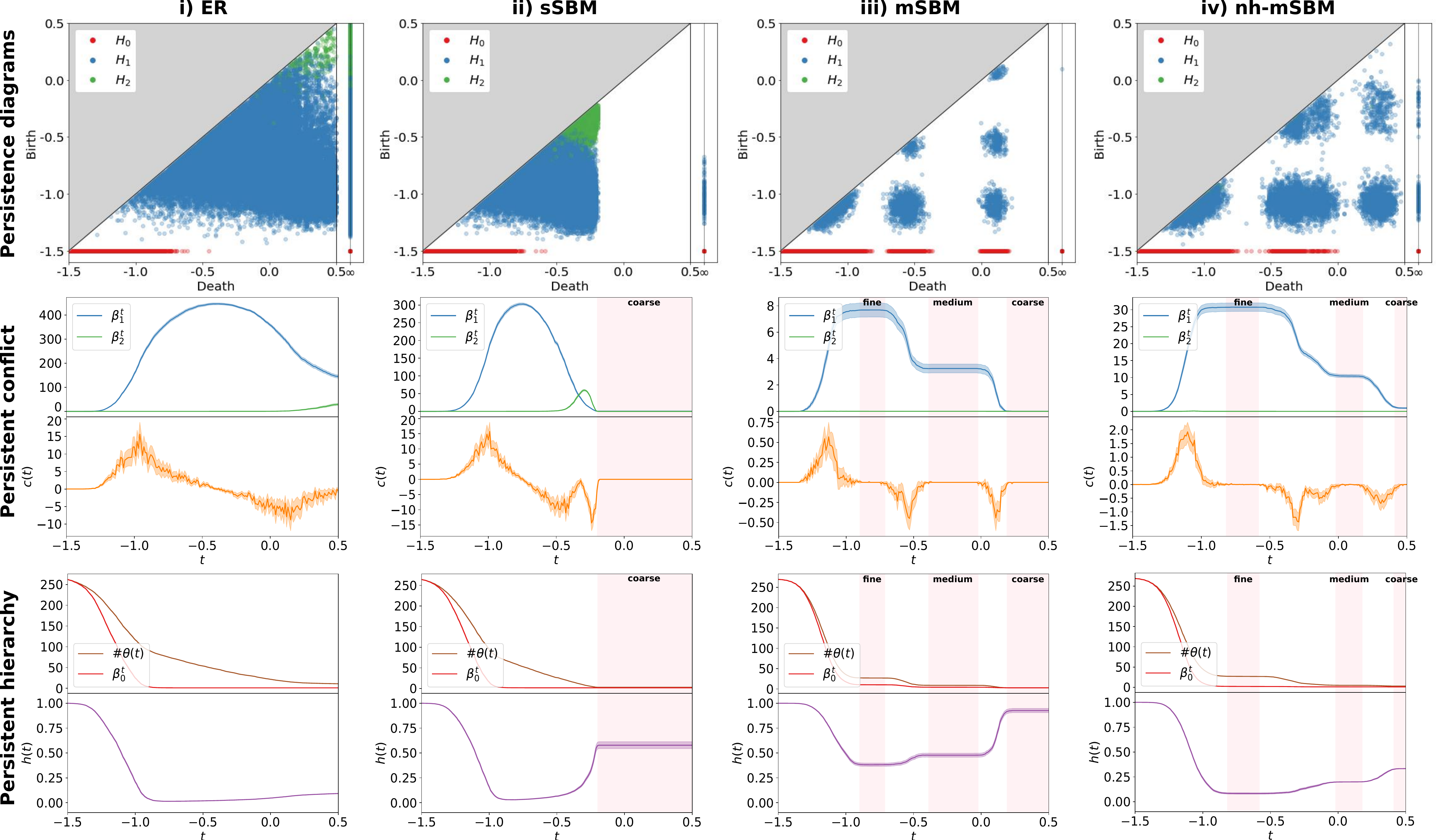}
    \caption{{MCF persistence diagrams, persistent hierarchy and persistent conflict for different models.} {i)-iv) For each model (ER, sSBM, mSBM, nh-mSBM), we compute: the ensemble PD of all 200 samples from each model (top row); the average one- and two-dimensional Betti curves and persistent conflict $c(t)$~\eqref{eq:persistent_conflict_total} with 95\% confidence intervals (middle row); the average zero-dimensional Betti curve, number of clusters and persistent hierarchy $h(t)$~\eqref{eq:persistent_hierachy} with 95\% confidence intervals (bottom row). The gaps in the PDs of sSBM, mSBM and nh-mSBM, which are also linked with plateaux after dips in $c(t)$, indicate conflict-resolving partitions and correspond well with ground-truth planted partitions at different scales (shaded in pink) identified from the data (Figure~\ref{S_fig:ensembles_nvi}). In contrast, no gaps or plateaux are present for the ER model, confirming its lack of robust partitions. The persistent hierarchy $h(t)$ is highest for mSBM and lowest for the ER model.}
}
    \label{fig:numerical-results-summary}
\end{figure}

For each of the four models, Figure~\ref{fig:numerical-results-summary} shows the PDs of all realisations (top row), as well as
 the persistent conflict (Equation~\ref{eq:persistent_conflict_total}) and persistent hierarchy (Equation~\ref{eq:persistent_hierachy}) averaged over the model ensemble with 95\% confidence intervals. 
For the ER model, the ensemble PD shows no distinctive gaps in the death times for dimensions 1 and 2, 
indicating that sequences of partitions obtained from the ER model have no good conflict-resolving partitions (Remark~\ref{rem:heuristic_gaps}). There is a large number of conflicts unresolved, as indicated by points at infinity, many of them two-dimensional and thus involving more ``conflict parties'' (Remark~\ref{rem:conflict_parties}). 
These findings confirm the lack of robust partitions and the absence of any scales in the ER model. The low values of the persistent hierarchy $h(t)$ with  average %
$\Bar{h}=0.2265$ (0.2257--0.2272) (Equation~\ref{eq:average_persistent_hierachy}) show that the sequences of partitions from the ER model have no natural quasi-hierarchical ordering. 

Fof the sSBM model, the ensemble PD shows a distinct gap after $t=-0.2$, corresponding to a conflict-resolving partition---note the small number of unresolved conflicts at infinity, none of which are two-dimensional.  This natural scale is also robustly indicated by a plateau after a dip in the total persistent conflict $c(t)$. We have checked \textit{a posteriori} that this scale corresponds to the planted partition in the sSBM (shaded in pink). The larger persistent hierarchy $\Bar{h}=0.43$ (0.42--0.45), indicates the presence of quasi-hierarchy at larger scales in the sequence of partitions, yet $h(t)$ does not reach the value of a perfect hierarchy $h(t)=1$ 
because of the stochastic nature of the sSBM which leads to the existence of clusters that connect vertices across blocks  in the sequences of partitions. 

For the mSBM model, we find a distinct clustering of birth-death tuples in the ensemble PD, with three gaps in the death time corresponding to the intrinsic scales in the model, also appearing as plateaux in the total persistent conflict $c(t)$. The ensemble of partitions exhibits high values of the persistent hierarchy $h(t)$ close to 1  indicating a strong degree of quasi-hierarchy in the sequence of partitions with 
$\Bar{h}=0.64$ (0.63--0.66). Again, although the mSBM model is strictly hierarchical in a statistical sense, 
the MS sequences of partitions for the individual realisations display some non-zero cross-block probabilities, due to random sample variability, 
hence $\Bar{h}\neq 1$. %

Finally, for the nh-mSBM model, we find reduced values of the persistent hierarchy $h(t)$
$\Bar{h}=0.429$ (0.423--0.434), reflecting the lack of nestedness in the nh-mSBM model, yet we still observe the clustering of birth-death tuples in the ensemble PD and the corresponding plateaux in  $c(t)$, for the three intrinsic scales of the model.

In summary, our numerics illustrate how the MCF can provide a rich, distinctive summary characterisation for sequences of partitions of our four models, which reflect the planted structure, 
from the level of hierarchy (via the persistent hierarchy) to the multiscale nature of conflict-resolving partitions (via the persistent conflict).

\section{Mathematical Links of MCF to Other Filtrations}\label{sec:alternative_filtrations}

\subsection{Equivalent Construction of the MCF Based on Nerve Complexes}\label{Sec:NerveComplex}

We construct a novel filtration from a sequence of partitions based on nerve complexes, inspired by the MAPPER construction~\autocite{singhTopologicalMethodsAnalysis2007}. (For background on nerve complexes see~\cite[p. 81]{matousekUsingBorsukUlamTheorem2003}). Recall that $\theta$ is piecewise-constant with $M$ critical values $t_1<t_2<...<t_M$.

\begin{definition}\label{def:nerve-based MCF}
    Let $\mathcal{C}(m)=(C_\alpha)_{\alpha\in A(m)}$, $1\le m\le M$, be the family of clusters indexed over the multi-index set 
    \begin{equation}\label{eq:nerve-based MCF_multiindex}
    A(m):=\{(\ell,i)\; |\; 1 \le \ell \le m,\; i \le \#\theta(t_\ell) \}
    \end{equation}
    such that $C_{(m,i)}$ is the $i$-th cluster in partition $\theta(t_m)$. 
    Then we define the \textit{nerve-based MCF} $\mathcal{N}=(N^t)_{t_1\le t}$ as
    \begin{align*}
        N^{t} := 
        \left\{ S\subseteq A(m) : \bigcap_{\alpha\in S}C_\alpha \neq \emptyset \right\},& \quad t\in[t_m,t_{m+1}),\; m=1,...,M-1\\
        N^{t} := 
        \left\{ S\subseteq A(M) : \bigcap_{\alpha\in S}C_\alpha \neq \emptyset \right\},& \quad t \ge t_M.
    \end{align*}
\end{definition}

The abstract simplicial complex $N^{t}$ records the intersection patterns of all clusters up to scale $t$. %
Hence, the nerve-based MCF $\mathcal{N}$ provides a complementary perspective to the MCF $\mathcal{M}$: while the vertices of $\mathcal{M}$ correspond to points in $X$ with homology generators indicating which points are contributing to conflicts, the vertices of $\mathcal{N}$ correspond to clusters in the sequence of partitions $\theta$ and so the homology generators inform us about which clusters lead to a conflict. We illustrate the construction of the nerve-based MCF in
Figure~\ref{fig:nerve-based MCF_illustration}.

\begin{figure}[!htb]
    \centering
    \includegraphics[width=\textwidth]{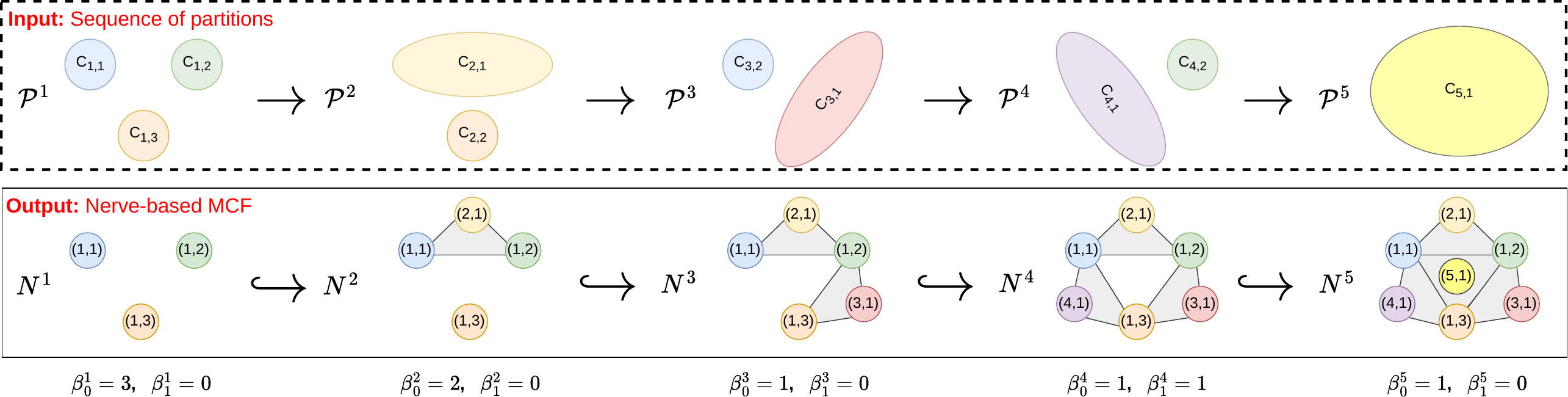}
    \caption{%
    We illustrate the nerve-based MCF construction on our running Example~\ref{ex:MCF_illustration}. The top row shows the %
    non-hierarchical sequence of partitions $\theta:[1,\infty)\rightarrow\Pi_X, t \mapsto \theta(t)$ evaluated at the critical values $\theta(t_i)=\mathcal{P}^i$ for $t_i =i$, $i=1,..5$, with cluster indexes as in Definition~\ref{def:nerve-based MCF}. The bottom row shows the nerve-based MCF $(N^t)_{1\le t\le 5}$, with labels only for the 0-simplices and a simplified  visualisation for $N^5$ that does not change the topology (i.e., we draw only the 2-simplex that closes the hole, and leave the 3-simplices undrawn since they do not change the homology). %
    Both constructions lead to the same PH (Proposition~\ref{prop:equiv_MCF_nerve-based MCF}), hence the Betti numbers match the MCF construction (Figure~\ref{fig:illustration_1_conflict}).  %
    }
    \label{fig:nerve-based MCF_illustration}
\end{figure}

It is no coincidence that the nerve-based MCF construction applied to our running example leads to the same Betti numbers as the MCF in Figure~\ref{fig:nerve-based MCF_illustration}. In fact, both filtrations lead to the same PH.  To prove this, we first adapt the Persistent Nerve Lemma~\cite[Lemma 3.4]{chazalPersistencebasedReconstructionEuclidean2008} to abstract simplicial complexes. To formulate the lemma, recall that the \textit{canonical geometric realisation}~\cite[p. 29]{deyComputationalTopologyData2022} of an abstract simplicial complex $K$ with $N$ vertices into $\mathbbm{R}^N$, denoted by $|K|$, maps the $k$-th vertex $v_k\in K$ to the canonical basis vector $e_k\in\mathbbm{R}^N$.

\begin{lemma}\label{lemma:persistent_nerve_lemma}
    Let $K\subseteq K^\prime$ be two finite abstract simplicial complexes and $\{K_\alpha\}_{\alpha\in A}$ and $\{K^\prime_\alpha\}_{\alpha\in A}$ be subcomplexes that cover $K$ and $K^\prime$ respectively, based on the same finite parameter set such that $K_\alpha \subseteq K^\prime_\alpha$ for all $\alpha\in A$. 
    Let $N$ denote the nerve $\mathcal{N}\left(\{|K_\alpha|\}_{\alpha\in A}\right)$ and $N^\prime$ the nerve $\mathcal{N}\left(\{|K^\prime_\alpha|\}_{\alpha\in A}\right)$. If the intersections $\bigcap_{i=0}^k |K_{\alpha_i}|$ and $\bigcap_{i=0}^k |K^\prime_{\alpha_i}|$ are either empty or contractible for all $k \in \mathbbm{N}$ and for all $\alpha_0,...,\alpha_k\in A$, then there exist homotopy equivalences $N\rightarrow |K|$ and $N^\prime \rightarrow |K^\prime|$ that commute with the canonical inclusions $|K|\hookrightarrow |K^\prime|$ and $N\hookrightarrow N^\prime$.
\end{lemma}

\begin{proof}
A full proof can be found in the SM. The idea of the proof is to use the canonical geometric realisation of $K^\prime$ and apply Lemma 3.4 from \autocite{chazalPersistencebasedReconstructionEuclidean2008} to the open cover of subcomplexes inflated by open balls with a radius dependent on the dimension of $K^\prime$. 
\end{proof}

We can now prove the equivalence of the PH of the MCF $\mathcal{M}=(K^t)_{t\ge t_1}$ and the nerve-based MCF $\mathcal{N}=(N^t)_{t_1\le t}$.

\begin{proposition}\label{prop:equiv_MCF_nerve-based MCF}
    For $k\ge 0$, $t\ge t_1$ and $p\ge 0$ we have  $H_k^p(N^t)\cong H_k^p(K^t)$.
\end{proposition}

\begin{proof}
    It is sufficient to consider the case $t=t_m$, $m=1,...,M$,
    see Remark~\ref{rem:filtration_critical_values}. 
    Let us denote $N:=N^{t}$ and $K:=K^{t}$, and  
    further denote $N^\prime := N^{t+p}$ and $K^\prime:=K^{t+p}$. Define $m^\prime\in\{m,...,M\}$ such that $t+p\in[t_{m^\prime},t_{m^\prime+1})$ if $m^\prime<M$ or $t+p\ge t_M$ if $m^\prime=M$. For the multi-index set $A:=A(m^\prime)$ (Equation~\ref{eq:nerve-based MCF_multiindex}), define the cover $\{K_\alpha\}_{\alpha\in A}$ by $K_\alpha = \Delta C_\alpha$ if $\alpha\in A(m)\subseteq A$ and $K_\alpha=\emptyset$ otherwise and the cover $\{K^\prime_\alpha\}_{\alpha\in A}$ by $K^\prime_\alpha= \Delta C_\alpha$ for all $\alpha \in A$. Then we have $K_\alpha\subseteq K^\prime_\alpha$ for all $\alpha\in A$ and we recover the MCF $K=\bigcup_{\alpha\in A}K_\alpha$ and the nerve-based MCF $N=\mathcal{N}\left(\{K^\prime_\alpha\}_{\alpha\in A}\right)$ and similarly we recover $K^\prime$ and $N^\prime$. It remains to show that for any $k\in\mathbbm{N}$ and $\alpha_0, ...,\alpha_k\in A$ the intersections $\bigcap_{i=0}^k |K^\prime_{\alpha_i}|$ are either empty or contractible. This is true because if $D=\bigcap_{i=0}^k |K^\prime_{\alpha_i}|\neq\emptyset$, then $D$ is the intersection of solid simplices and thus a solid simplex itself. Using Lemma~\ref{lemma:persistent_nerve_lemma} now yields homotopy equivalences $N\rightarrow |K|$ and $N^\prime\rightarrow |K^\prime|$ that commute with the canonical inclusions $|K|\hookrightarrow |K^\prime|$ and $N\hookrightarrow N^\prime$. This leads to the following commutative diagram on the level of homology groups:
    \[\begin{tikzcd}
            H_k(N) \arrow{r} \arrow[swap]{d} & H_k(N^\prime) \arrow{d} \\
            H_k(K) \arrow{r} & H_k(K^\prime),
        \end{tikzcd}
    \]
    which implies that $H_k^p(N) \cong H_k^p(K)$. 
\end{proof}

This proposition shows that the point-centered perspective of the MCF and the cluster-centered perspective of the nerve-based MCF are essentially equivalent, which also has %
computational consequences.

\begin{remark}
    If $\sum_{m=1}^M\#\mathcal{P}^{t_m} < \# X$, i.e., the total number of clusters is smaller than the size of $X$, then it can be computationally beneficial to use the nerve-based MCF instead of the MCF. In the common case where $\mathcal{P}^{t_1}$ is a partition of singletons, then $\#\mathcal{P}^{t_1}=\# X$ and the MCF should be preferred for computational reasons. 
\end{remark}

Finally, we can use Proposition \ref{prop:equiv_MCF_nerve-based MCF} to prove Proposition~\ref{prop:kconflicts-intersections} in Section~\ref{sec:higher_dim_ph}, which relates $k$-conflicts in the MCF to cluster intersection patterns more readily understood in the nerve-based MCF.

\begin{proof}[Proof of Proposition~\ref{prop:kconflicts-intersections}]
    It is sufficient to consider the case $t=t_m$, $m=1,...,M$, see Remark~\ref{rem:filtration_critical_values}. Following Proposition~\ref{prop:equiv_MCF_nerve-based MCF}, $K^{t}$ has the same homology as $N^{t}$
    where simplices $\sigma\in N^{t}$ are subsets of the multi-index set
    $A:=A(m)$ (Equation~\ref{eq:nerve-based MCF_multiindex}). %
    We can assume without loss of generality that the map $A\rightarrow \Pi_X,\alpha\mapsto C_\alpha$ is injective (when we only track the first occurrence of a cluster) and so all clusters $C_\alpha$, $\alpha\in A$, are mutually distinct. Then $\beta^t_k>0$ yields that $\beta_k[N^t]>0$ and so there exists $z \in Z_k[N^t]$ with $z=\sigma_1+...+\sigma_n\in N^t_k$ and $[z]\neq 0$. The existence of $\sigma\in N^t_k$ requires that there exist $k+1$ index pairs in $A$ and associated mutually distinct clusters $C_1,...,C_{k+1}$ such that $\bigcap_{i =1}^{k+1} C_i = \emptyset$.
\end{proof}

\subsection{MCF in the Special Case of Hierarchical Clustering}\label{sec:CAG_hierarchical}

It is illustrative to examine the MCF of the strictly hierarchical case. For that purpose, we study the properties of matrix of first contacts $D_\theta$ defined in Equation~\eqref{eq:adjacency_CAG}. To simplify our notation, if $\theta$ is clear from the context, we drop it and write $D$ instead of $D_\theta$.

Recall from Equation~\eqref{eq:adjacency_CAG} that the element $D_{xy}\ge t_1$ is the first scale at which two points $x,y\in X$ become part of the same cluster in $\theta$. The matrix $D$ is symmetric with $t_1$ on the diagonal and fulfils the properties of a \textit{dissimilarity measure} ($D_{xx}\le D_{xy}=D_{yx}$ for all $x,y\in X$)~\cite[p. 10]{chazalPersistenceStabilityGeometric2014}. 
In the case of hierarchical clustering, $D$  fulfils the strong triangle-inequality ($D_{xz}\le\max(D_{xy},D_{yz})$ for all $x,y,z\in X$) because merged points never separate and $D$ thus defines an ultrametric on $X$, see %
\cite[Lemma 10]{carlssonCharacterizationStabilityConvergence2010}. In the case of a non-hierarchical sequence, $D$ does not even fulfil the standard triangle inequality in general because merged points can separate again. 

Let us define a simplicial complex $L^t$, $t\ge t_1$, as the clique complex of the thresholded \textit{cluster assignment graph} (CAG) $G_t=(X,E_t)$ with %
undirected edges $$E_t:=\left\{\{x,y\}, x,y\in X\; |\; D_{xy}\le t\right\},$$ see Equation~\eqref{eq:clique_filtration}. The clique complex filtration $\mathcal{L}=(L^t)_{t\ge t_1}$ guarantees the stability of PDs because $D$ is a dissimilarity measure~\cite[p. 10]{chazalPersistenceStabilityGeometric2014}. However, note that $\mathcal{L}$ is 2-determined, in contrast to the MCF (Remark~\ref{rem:2-determined}) and not equivalent to the MCF $\mathcal{M}$ in general. %

\begin{example}[Running example]
    In our running example (Example~\ref{ex:MCF_illustration})  $K^t=L^t$ for $t\le 3$. However, $K^4 \neq L^4 = 2^X$ because $L^4$ is the clique complex corresponding to the undirected graph $G_4$ with adjacency matrix
    \begin{equation*}A=\begin{pmatrix}0&2&0\\0&0&3\\4&0&0\end{pmatrix},
    \end{equation*}
    where all three nodes form a clique such that the 2-simplex $[x_1,x_2,x_3]$ is contained in $L^4$. This means that $H^4_1(L^4)$ is trivial in contrast to $H^4(K^4)=\mathbbm{Z}$, hence the PH of $\mathcal{L}$ is not equivalent.
\end{example}

Our example suggests that $\mathcal{L}$ is less sensitive to higher-dimensional conflicts, but we can show that $\mathcal{L}$ has the same zero-dimensional PH as $\mathcal{M}$. This implies that we can compute the persistent hierarchy $h(t)$ of the MCF (Equation~\ref{eq:persistent_hierachy}) also from $\mathcal{L}$.

\begin{proposition}
    For $t \ge t_1$ and $p\ge 0$ we have $H_0^p(L^t)\cong H_0^p(K^t)$ but the equality $H_k^p(L^t)\cong H_k^p(K^t)$ does not hold for $1\le k \le \dim(K)$ in general.
\end{proposition}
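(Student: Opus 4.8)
The plan is to separate the two claims. The negative claim—that $H_k^p(L^t)=H_k^p(K^t)$ fails in general for $1\le k\le\dim(K)-1$—requires only a single witness, and one is already at hand. Running Example~4 exhibits a sequence of partitions for which $K^4\neq L^4=2^X$: in $L^4$ the triple $[x_1,x_2,x_3]$ is already a clique of the thresholded CAG, so $H_1(L^4)=0$, whereas by Example~\ref{ex:1_conflict} the $1$-chain $[x_1,x_2]+[x_2,x_3]+[x_3,x_1]$ is a non-bounding cycle in $K^4$, so $H_1(K^4)\neq 0$. Taking $t=4$ and $p=0$ therefore gives $H_1^0(L^4)\neq H_1^0(K^4)$, which settles the negative part for $k=1$, hence in general.

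For the positive claim I would use the standard fact that zero-dimensional (persistent) homology depends only on the filtered $1$-skeleton, combined with the observation that the $1$-skeletons of $\mathcal{M}$ and $\mathcal{L}$ coincide at every scale. The key comparison is at the level of edges: an edge $\{x,y\}$ belongs to $K^t$ exactly when it is a face of some solid simplex $\Delta C$ with $C\in\mathcal{P}^s$ and $s\le t$, i.e.\ exactly when $x\sim_s y$ for some $s\le t$. On the other hand, $\{x,y\}$ is an edge of the clique complex $L^t$ exactly when $A_{xy}=\min\{s\ge t_1\mid x\sim_s y\}\le t$, which is again the condition that $x\sim_s y$ for some $s\le t$. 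Hence the filtered $1$-skeletons are literally equal, and since every vertex is born at $t_1$ in both filtrations, the groups $C_0^t$ of $0$-chains also agree at every scale.

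The reduction of $H_0^p$ to this shared $1$-skeleton is then immediate from the definition. Since $\partial_0=0$ we have $Z_0^t=C_0^t$, so by \eqref{eq:persistent_homology_group} the persistent group is $H_0^{t,p}=C_0^t/(B_0^{t+p}\cap C_0^t)$, where $B_0^{t+p}=\im\partial_1^{t+p}$ is generated by the boundaries $[y]-[x]$ of the edges present at scale $t+p$. Because the vertex sets and edge sets of $K^\bullet$ and $L^\bullet$ coincide scale by scale, both $C_0^t$ and $B_0^{t+p}$ are identical for the two filtrations, giving $H_0^p(L^t)=H_0^p(K^t)$ for all $t\ge t_1$ and $p\ge 0$.

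The only point that needs to be stated with a little care is that simplices of dimension $\ge 2$ never enter $B_0$, and so cannot affect $H_0$; this is exactly why the higher solid simplices present in $K^t$ but absent from $L^t$ are free to change the higher-dimensional homology while the zero-dimensional persistent homology is forced to agree. I do not expect a genuine obstacle here: the argument is essentially a bookkeeping identity once the edge-set equality is established, and that equality is the substantive step.
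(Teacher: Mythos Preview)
Your proof is correct and follows essentially the same approach as the paper's: both argue that the $1$-skeletons of $K^t$ and $L^t$ coincide (you spell out the edge-set verification, the paper simply asserts it), whence the $0$-dimensional persistent homology agrees, and both invoke the running example for the negative claim. One small slip in your closing remark: the inclusion goes the other way---it is $L^t$ that may contain extra higher-dimensional simplices absent from $K^t$ (since every cluster-simplex of $K^t$ is automatically a clique in $G_t$, so $K^t\subseteq L^t$), as witnessed by $[x_1,x_2,x_3]\in L^4\setminus K^4$ in the running example; this does not affect the argument.
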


\begin{proof}
    The 1-skeletons of $L^t$ and $K^t$ coincide and so $H_0^p(L^t)\cong H_0^p(K^t)$. %
    However, $L^t$ is 2-determined but $K^t$ not and so %
    $H_k^p(L^t)\ncong H_k^p(K^t)$ %
    for $1\le k \le \dim(K)$ in general.
\end{proof}

For a strictly hierarchical sequence of partitions, $\mathcal{L}$ is equivalent to the VR filtration (Equation~\ref{eq:VR_point_cloud}) of the ultrametric space $(X,D)$ and thus %
leads to the same PH as the MCF.

\begin{corollary}\label{cor:hierarchical_cag_equivalent}
    If the sequence of partitions $(\theta(t))_{t\ge t_1}$ is strictly hierarchical, then $H_k^p(L^t)\cong H_k^p(K^t)$ for all $k \le \dim(K)$, $t\ge t_1$ and $p\ge 0$.
\end{corollary}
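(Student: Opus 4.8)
The plan is to prove the stronger statement that in the strictly hierarchical case the two filtered complexes coincide outright, $L^t = K^t$ for every $t \ge t_1$, from which equality of all persistent homology groups is immediate: identical filtrations induce identical chain complexes, boundary maps, and hence identical persistence modules. So the whole task reduces to a combinatorial identity between the MCF complex and the clique complex of the thresholded CAG at each scale.

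First I would pin down the 1-skeletons. As already observed in the proof of the preceding proposition, $L^t$ and $K^t$ share the same 1-skeleton for every $t$: a pair $\{x,y\}$ is an edge precisely when $A_{xy} \le t$, i.e. when $x \sim_s y$ for some scale $s \le t$. Here I would invoke the hierarchical hypothesis to simplify this condition. Letting $t_m$ be the largest critical value with $t_m \le t$, the nesting $\mathcal{P}^{t_1} \le \dots \le \mathcal{P}^{t_m}$ means that once two points share a cluster they remain together at all coarser scales, so $x \sim_s y$ for some $s \le t$ if and only if $x \sim_{t_m} y$. Thus the common edge set of both complexes is exactly the relation $\sim_{t_m}$.

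The key step is to show that $K^t$ is 2-determined in the hierarchical case, recalling that it is \emph{not} 2-determined in general. Suppose $\sigma = \{v_0, \dots, v_k\}$ has all of its pairs as edges of $K^t$; by the previous paragraph this means $v_i \sim_{t_m} v_j$ for every pair $i,j$. Since $\sim_{t_m}$ is an equivalence relation, transitivity forces all the $v_i$ into a single cluster $C \in \mathcal{P}^{t_m}$, whence $\sigma \in \Delta C \subseteq S^{t_m} \subseteq K^t$. Hence $K^t$ is 2-determined. Because $L^t$ is 2-determined by construction (being a clique complex) and the two complexes have the same 1-skeleton, they must coincide, giving $L^t = K^t$.

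The main obstacle, and essentially the only nontrivial point, is exactly this 2-determinacy, where the hierarchical hypothesis does all the work. In the non-hierarchical case the equivalence ``edge if and only if $\sim_{t_m}$'' breaks down because distinct pairs of a set $\sigma$ may be joined at different, incomparable scales, which is precisely the source of the missing 2-simplex $[x_1,x_2,x_3]$ in the running example. Once $L^t = K^t$ is established for all $t$, the two filtrations are literally the same filtered simplicial complex, so the persistent homology groups $H_k^p(L^t)$ and $H_k^p(K^t)$ agree for every $k \le \dim(K)-1$, $t \ge t_1$ and $p \ge 0$, completing the proof.
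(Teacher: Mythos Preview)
Your argument is correct and in fact proves something stronger than the paper does: you establish the equality $L^t = K^t$ of filtered complexes, whereas the paper only shows that their persistent homology groups agree. The approaches are genuinely different. The paper argues separately that both sides have trivial higher homology: for $K^t$ this is Proposition~\ref{prop:higher_dim_ph_0_when_hierarchical}, and for $L^t$ the paper observes that in the hierarchical case the adjacency matrix $A$ is an ultrametric and then invokes an external result that the Vietoris--Rips filtration of an ultrametric space has vanishing higher homology. Your route is more self-contained and elementary: by showing directly that $K^t$ becomes $2$-determined under the hierarchy assumption (via transitivity of $\sim_{t_m}$), you identify both complexes with the flag complex of the common $1$-skeleton and avoid any appeal to the ultrametric literature. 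The paper's approach has the minor advantage of making explicit the connection to the dendrogram/ultrametric viewpoint of Carlsson and M\'emoli, which sets up the subsequent stability corollary; your approach buys a cleaner, stronger conclusion and essentially reproves Proposition~\ref{prop:higher_dim_ph_0_when_hierarchical} along the way as a by-product of the $2$-determinacy.
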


\begin{proof}
    From the previous proposition we already know that the zero-dimensional PH groups of $\mathcal{L}$ and $\mathcal{M}$ are equivalent. Using Proposition~\ref{cor:higher_dim_ph_0_when_hierarchical}, it remains to show that $H_k^p(L^t)=0$.  %
    As $\theta$ is strictly hierarchical, the adjacency matrix $D_\theta$ of the CAG (Equation~\ref{eq:adjacency_CAG}) corresponds to an ultrametric. To complete the proof, recall that the higher-dimensional homology groups of a VR filtration constructed from an ultrametric space are zero, see~\cite[Theorem 31]{wangPersistentTopologyGeometric2022}.
\end{proof}

Analysing hierarchical clustering with the MCF $\mathcal{M}$ is thus equivalent to analysing the ultrametric space $(X,D_\theta)$ associated with the dendrogram with a VR filtration. If we further assume that the hierarchical sequence of partitions was obtained from a finite metric space $(X,d)$ using single-linkage hierarchical clustering, we can use a stability theorem by Carlsson and Mémoli~\cite[Proposition 26]{carlssonCharacterizationStabilityConvergence2010} to relate the (only non-trivial) zero-dimensional PD of the MCF directly to the underlying ultrametric space. 

\begin{corollary}\label{cor:MCF_SLHC}

    Let $\theta:[0,\infty)\rightarrow \Pi_X$ and $\tilde{\theta}:[0,\infty)\rightarrow \Pi_Y$ be two sequences of partitions obtained from the finite metric spaces $(X,d_X)$ and $(Y,d_Y)$ respectively using single-linkage hierarchical clustering. Then we obtain the following inequalities for the corresponding MCFs $\mathcal{M}$ and $\Tilde{\mathcal{M}}$ and ultrametrics $D_\theta$ and $D_{\tilde{\theta}}$:
    \begin{equation*}
        d_{W,\infty}\left(\Dgm_0(\mathcal{M}),\Dgm_0(\Tilde{\mathcal{M}})\right) \; \le\; d_\GH ((X,D_\theta),(Y,D_{\tilde{\theta}})) \; \le\; d_\GH((X,d_X),(Y,d_Y)),
    \end{equation*}
    where $d_{W,\infty}$ is the bottleneck distance 
    (Equation~\ref{eq:bottleneck-distance}) 
    and $d_\GH$ is  the Gromov-Hausdorff distance.
\end{corollary}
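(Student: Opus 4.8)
The plan is to prove the two inequalities separately, since each has a distinct origin: the right-hand inequality is a statement about the non-expansiveness of single linkage clustering and does not involve persistent homology, whereas the left-hand inequality is an instance of Vietoris-Rips stability applied to the ultrametric spaces $(X,A)$ and $(Y,\tilde{A})$.

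First I would establish the right-hand inequality $d_\GH((X,A),(Y,\tilde{A})) \le d_\GH((X,d_X),(Y,d_Y))$. The key observation is that, for a hierarchical sequence of partitions produced by single linkage, the ultrametric $A$ of the CAG (Eq.~\eqref{eq:adjacency_CAG}) coincides with the single linkage ultrametric that Carlsson and M\'emoli~\cite{carlssonCharacterizationStabilityConvergence2010} associate to the dendrogram, as already recorded in Section~\ref{sec:CAG_hierarchical}; indeed, the hierarchy guarantees that once two points are merged they remain merged, so $A_{xy}$ is exactly the height at which $x$ and $y$ are first joined. The inequality is then precisely the Carlsson--M\'emoli result that single linkage is a non-expansive map from finite metric spaces to finite ultrametric spaces with respect to the Gromov-Hausdorff distance, which I would invoke directly after checking that the normalisation of $d_\GH$ matches.

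Next I would treat the left-hand inequality. By the corollary and discussion immediately preceding this statement, in the strictly hierarchical case the MCF $\mathcal{M}$ is equivalent to the Vietoris-Rips filtration of the ultrametric space $(X,A)$ (and $\tilde{\mathcal{M}}$ to that of $(Y,\tilde{A})$), with the zero-dimensional diagram the only non-trivial one. I would then apply the Vietoris-Rips stability theorem of Chazal et al.~\cite{chazalPersistenceStabilityGeometric2014}, which bounds the bottleneck distance between the persistence diagrams of two Vietoris-Rips filtrations by twice the Gromov-Hausdorff distance of their underlying spaces, giving $d_{W,\infty}(\Dgm_0(\mathcal{M}),\Dgm_0(\tilde{\mathcal{M}})) \le 2\, d_\GH((X,A),(Y,\tilde{A}))$, and dividing by two yields the claim.

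The main obstacle I anticipate is bookkeeping around conventions rather than any genuine difficulty. In particular I must reconcile the factor of two: the Vietoris-Rips filtration in Eq.~\eqref{eq:VR_point_cloud} is indexed by $2\epsilon$, whereas the MCF indexes simplices directly by the merge scale $A_{xy}$ (the diameter convention with threshold equal to the filtration parameter), and it is exactly this convention together with the standard $\tfrac{1}{2}$-normalisation of $d_\GH$ that produces the stability constant $2$, hence the $\tfrac{1}{2}$ on the left-hand side of the corollary. I would also confirm that $\Dgm_0(\mathcal{M})$ is literally the zero-dimensional Vietoris-Rips diagram of $(X,A)$ so that Chazal et al.~\cite{chazalPersistenceStabilityGeometric2014} applies verbatim, and that the same ultrametric $A$ feeds both the Gromov-Hausdorff term in the middle and the diagram on the left, so that the two inequalities chain correctly.
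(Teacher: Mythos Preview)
Your proposal is correct and matches the paper's approach exactly: the paper's proof is a two-line argument that cites Carlsson--M\'emoli~\cite{carlssonCharacterizationStabilityConvergence2010} for the right-hand inequality (non-expansiveness of single linkage in Gromov--Hausdorff distance) and a Vietoris--Rips stability result for the left-hand inequality. The only cosmetic difference is that the paper cites~\cite{caoApproximatingPersistentHomology2022} rather than~\cite{chazalPersistenceStabilityGeometric2014} for the Vietoris--Rips bound, and your additional care about the factor-of-two convention is justified but not spelled out in the paper.
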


\begin{proof}
For hierarchical sequences of partitions $\theta$ and $\tilde{\theta}$, $D_\theta$ and $D_{\tilde{\theta}}$ %
as defined in Equation~\eqref{eq:adjacency_CAG} are ultrametrics. The proof follows directly from  %
\cite[Proposition 26]{carlssonCharacterizationStabilityConvergence2010} and a standard stability result for the VR filtration~\cite[Theorem 3.1]{chazalGromovHausdorffStableSignatures2009}. See SM for a full proof. 
\end{proof}

\section{Conclusion}\label{sec:Conclusion}

In this article, we develop a TDA-based framework for the analysis and comparison of (non-hierarchical) sequences of partitions that arise in multiscale clustering applications. We define the \textit{Multiscale Clustering Filtration} (MCF), a filtration of abstract simplicial complexes that encodes arbitrary patterns of cluster assignments across scales. %
We use the zero-dimensional PH of the MCF to define a measure for the hierarchy in the sequence of partitions called \textit{persistent hierarchy}. We also show that the higher-dimensional PH tracks the emergence and resolution of conflicts between cluster assignments across scales, and we define the measure of \textit{persistent conflict} to identify partitions that resolve many conflicts. We illustrate numerically how the MCF PD and our derived measures can characterise multiscale data clusterings and identify ground truth partitions at multiple scales, if existent, as those resolving many conflicts.

While multiscale clusterings have been tackled with TDA-based methods before, these were limited to the hierarchical case. Motivated by Multiscale MAPPER~\autocite{deyMultiscaleMapperTopological2016}, an algorithm that produces a hierarchical sequence of representations at multiple levels of resolution, the notion of `Topological Hierarchies' was developed to study tree structures emerging from hierarchical clustering~\autocite{brownTopologicalHierarchiesDecomposition2022,brownHELOCApplicantRisk2018}. Similar objects (e.g., merge trees, branching morphologies or phylogenetic trees) have also been studied with topological tools and their structure can be distinguished using persistent barcodes~\autocite{kanariTopologicalRepresentationBranching2018, kanariTreesBarcodesBack2020}. In contrast, our setting is closer to the study of phylogenetic networks with horizontal evolution across lineages~\autocite{chanTopologyViralEvolution2013}, and the MCF applies to both hierarchical and non-hierarchical multiscale clustering. 
The MCF can thus be interpreted as a tool to study more general Sankey diagrams (rather than only strictly hierarchical dendrograms) that emerge naturally from multiscale data analysis. A related framework for the analysis of dynamic graphs was developed by Kim and Mémoli who define so-called `formigrams' that lead to `zigzag' diagrams of partitions~\autocite{kimExtractingPersistentClusters2022}, but a detailed comparison lies beyond the scope of this article and deserves further work.

In conclusion, the MCF offers a concise summary of non-hierarchical sequences of partitions through PDs, and thus gives access to a range of topological features that can be derived from the PDs and utilised as feature maps in downstream machine learning tasks, such as classification or clustering~\autocite{bubenikStatisticalTopologicalData2015,henselSurveyTopologicalMachine2021}. Future work will focus on leveraging MCF in various machine learning applications.

Several additional open research directions remain. One goal is to compute minimal generators of the MCF PH classes to locate not only when, but also where, conflicts emerge in the data set. Furthermore, we are currently working on a bootstrapping scheme for MCF inspired by~\autocite{caoApproximatingPersistentHomology2022}, which would enable the application of MCF to larger data sets; yet, while our initial experimental results are promising, a theoretical underpinning and estimation of error rates still need to be established. In particular, this requires further analysis of the stability of the MCF for small perturbations in the sequence of partitions and we plan to study the effect of (locally) randomizing or inverting the scale index $t$, which plays the role of a coarsening parameter for sequences of partitions.

\section*{Acknowledgements}
JS acknowledges support from the EPSRC (PhD studentship through the Department of Mathematics at Imperial College London). MB acknowledges support from EPSRC grant EP/N014529/1 supporting the EPSRC Centre for Mathematics of Precision Healthcare. We thank Heather Harrington for helpful discussions and for the opportunity to present work in progress at the Oxford Centre for Topological Data Analysis in January 2023. We also thank Iris Yoon, Lewis Marsh and Amritendu Dhar Shekhar for valuable discussions. This work benefited from the first author's participation in Dagstuhl Seminar 23192 ``Topological Data Analysis and Applications'' in May 2023.

\appendix
\section{Additional background on Sankey diagrams}\label{S_sec:Sankey}

Non-hierarchical sequences of partitions are naturally represented by \textit{Sankey diagrams}, which allow for crossings~\autocite{zarateOptimalSankeyDiagrams2018}. In the Sankey diagrams, each level corresponds to a partition of a set $X$ with vertices representing its clusters, and the flows between levels indicate the assignments of elements 
between clusters.

\begin{definition}
    For a sequence of partitions $\theta$ (Equation~\ref{eq:scale_function}) with critical points $t_1\le t_2\le ... \le t_M$ we define its corresponding Sankey diagram as the $M$-partite weighted graph $S=(V,E)$ with vertices $V=\theta(t_1)\uplus ... \uplus \theta(t_M)$ and edges $E=E_1\uplus ... \uplus E_{M-1}$ such that $E_i=\{(C,C^\prime)\;|\; C\in\theta(t_i), C^\prime\in\theta(t_{i+1}): C\cap C^\prime\neq\emptyset \}$ for $i=1,...,M-1$ and the weights are given by $W:V\times V\mapsto[0,\infty)$ such that $W(C,C^\prime)=\#(C\cap C^\prime)$.
\end{definition}

Note that the Sankey diagram $S$ is flow-preserving, i.e., the flows from one level to the next always sum up to $\#X$. If $\theta$ is obtained through hierarchical clustering, the Sankey diagram $S$ reduces to an acyclic merge tree also called \textit{dendrograms}~\autocite{jainDataClusteringReview1999,carlssonCharacterizationStabilityConvergence2010}.

\section{Extended proofs}

First we provide an additional proof of Corollary~\ref{cor:higher_dim_ph_0_when_hierarchical} in which we show directly that hierarchical sequences of partitions lead to a trivial higher-dimensional homology.

\begin{proof}[Proof of Corollary~\ref{cor:higher_dim_ph_0_when_hierarchical}]
    Let $z\in Z^t_k$ for some $t \ge t_1$ and $1\le k \le \dim(K)$ and let $m\le M$ be the largest $m$ such that $t_m\le t$, i.e., $K^t=K^{t_m}$. Then there exist $k$-simplices $\sigma_1, ..., \sigma_n\in K^t$, $n\in\mathbbm{N}$, such that $z=\sigma_1+...+\sigma_n$. In particular, for all $i=1, ...,n$ exist $m(i)\le m$ such that for all $x,y\in\sigma_i$ we have $x\sim_{t_{m(i)}}y$. As the sequence of partitions is hierarchical, $x\sim_{{t_{m(i)}}}y$ for some $x,y\in X$ implies that $x\sim_{{t_m}}y$ and so for all $x,y\in\bigcup_{i=1}^n\sigma_i$ we have $x\sim_{t_m}y$. This means that $\bigcup_{i=1}^n\sigma_i\in K^t$ and so there exists a $c\in C^t_{k+1}$ such that $\partial_{k+1}c=z$. Hence, $Z^t_k\subseteq B^t_k$ which proves $H^{t,p}_k=0$ for all $p\ge 0$.
\end{proof}

Next, we provide a full proof of the Persistent Nerve Lemma~\ref{lemma:persistent_nerve_lemma} for abstract simplicial complexes based on the proof idea sketched in the main manuscript.

\begin{proof}[Proof of Lemma~\ref{lemma:persistent_nerve_lemma}]
    Let $K^\prime$ be an abstract simplicial $p$-complex with $N$ vertices, then we use the canonical geometric realisation in $(\mathbbm{R}^{N},d)$ that maps the $k$-th vertex $v_k$ to the $k$-th canonical basis vector $e_k$, and where $d$ is the standard Euclidian distance. We can compute a geometric realisation of $K\subseteq K^\prime$ with the same map and so the underlying spaces fulfil $|K|\subseteq|K^\prime|\subseteq\mathbbm{R}^{N}$. Also observe that for any $\sigma, \tau\in K^\prime$ we have
    \begin{equation}\label{eq:intersection_dmin}
        |\sigma| \cap |\tau| = \emptyset \iff d(|\sigma|,|\tau|)\ge d_{\min} :=\frac{1}{\sqrt{\dim(K^\prime)+1}}.
    \end{equation}
    This is true because $|\sigma| \cap |\tau|$ implies that $|\sigma|$ and $|\tau|$ are orthogonal sets in $\mathbbm{R}^N$ and so $d(|\sigma|,|\tau|)=\min_{x\in|\sigma|, y\in|\tau|}\sqrt{||x||^2+||y||^2}\ge \min_{x\in|\sigma|} ||x||$ and because every $x\in|\tau|$ is a convex combination of at least $p+1$ basis vectors we have $||x||\ge \frac{1}{\sqrt{p+1}}$.
 
    Let $\mathcal{B}_r(\cdot)$ denote the open ball in $|K|\subseteq\mathbbm{R}^{N}$ with radius $r:=\frac{d_{\min}}{3}>0$ centred around a point (or a subset) and for $\alpha \in A$ we define the open `inflation' of $|K_\alpha|$ in $|K|$ as
    \begin{equation*}
        U_\alpha = \mathcal{B}_r(|K_\alpha|)=\bigcup_{x \in |K_\alpha|} \mathcal{B}_r(x).
    \end{equation*}
    Then $\mathcal{U}=\{U_\alpha\}_{\alpha\in A}$ is an open cover for $|K|$ and a similar construction leads to the open cover $\mathcal{U}^\prime=\{U^\prime_\alpha\}_{\alpha\in A}$ for $|K^\prime|$ such that $U_\alpha \subseteq U^\prime_\alpha$ for all $\alpha\in A$. Moreover, for all $k\in \mathbbm{N}$ and $\alpha_0,...,\alpha_k\in A$ it holds that
    \begin{equation}\label{eq:intersection_inflation}
        \bigcap_{i=0}^k U_{\alpha_i} 
        = \mathcal{B}_r\left(\bigcap_{i=0}^k |K_{\alpha_i}|\right).
    \end{equation}
    While ``$\supseteq$'' is obvious, assume for ``$\subseteq$'' that $\tilde{x}\in\bigcap_{i=0}^k U_{\alpha_i}\neq\emptyset$. Then there exist $x_i\in|K_{\alpha_i}|$ such that $\tilde{x}\in\mathcal{B}_r(x_i)\subseteq\mathcal{B}_r(|K_{\alpha_i}|)$ for all $i$. For $i\neq j$ this implies
    \begin{equation*}
        d(x_i,x_j)\le d(x_i,\tilde{x})+d(\tilde{x},x_j)\le 2r < d_{\min},
    \end{equation*}
    and so $x_i=x_j$ by Equation~\eqref{eq:intersection_dmin}. Define $x:=x_0$, then $x\in\bigcap_{i=0}^k|K_{\alpha_i}|$ and $\tilde{x}\in\mathcal{B}_r(x)\subseteq\mathcal{B}_r\left(\bigcap_{i=0}^k |K_{\alpha_i}|\right)$ which proves ``$\subseteq$''.

    Equation~\eqref{eq:intersection_inflation} implies that $\bigcap_{i=0}^k U_{\alpha_i}$ is either empty or contractible and so $\mathcal{U}$ is a good open cover. The same argument shows that $\mathcal{U}^\prime$ is also a good open cover. 
    For the nerves $\mathcal{N}(\mathcal{U})$ and $\mathcal{N}(\mathcal{U}^\prime)$, Lemma 3.4 from \autocite{chazalPersistencebasedReconstructionEuclidean2008} thus yields that there exist homotopy equivalences $\mathcal{N}(\mathcal{U})\rightarrow |K|$ and $\mathcal{N}(\mathcal{U}^\prime) \rightarrow |K^\prime|$ that commute with the canonical inclusions $|K|\hookrightarrow |K^\prime|$ and $\mathcal{N}(\mathcal{U})\hookrightarrow \mathcal{N}(\mathcal{U}^\prime)$. We complete the proof by observing that Equation~\eqref{eq:intersection_inflation} leads to $N=\mathcal{N}(\mathcal{U})$ and similarly one obtains $N^\prime=\mathcal{N}(\mathcal{U}^\prime)$.
\end{proof}

Finally, we provide a full proof of the MCF stability in the special case of single linkage hierarchical clustering.

\begin{proof}[Proof of Corollary~\ref{cor:MCF_SLHC}]
    Single linkage hierarchical clustering produces strictly hierarchical sequences of partitions and so  $A$ and $\Tilde{A}$, the adjacency matrices of the CAGs associated to the MCFs $\mathcal{M}$ and $\Tilde{\mathcal{M}}$ as defined in Equation~\eqref{eq:adjacency_CAG}, are ultrametrics. The second inequality thus follows the stability result for single linkage hierarchical clustering by \cite[Proposition 26]{carlssonCharacterizationStabilityConvergence2010}, because $A$ and $\Tilde{A}$ are equivalent to the ultrametrics associated to the dendrograms of the sequences of partitions obtained from single linkage hierarchical clustering.

    Furthermore, one can interpret the clique complex filtrations $\mathcal{L}$ and $\mathcal{L}^\prime$ derived from $A$ and $\Tilde{A}$ as Vietoris-Rips filtrations defined on the finite ultrametric spaces $(X,A)$ and $(X,\Tilde{A})$, respectively. Following a standard stability result for the Vietoris-Rips filtration~\cite[Theorem 3.1]{chazalGromovHausdorffStableSignatures2009} we get
    \begin{equation*}    d_{W,\infty}\left(\Dgm_0(\mathcal{L}),\Dgm_0(\Tilde{\mathcal{L}})\right) \; \le\; d_\GH ((X,A),(Y,\tilde{A})).
    \end{equation*}
    The first inequality then follows because Corollary~\ref{cor:hierarchical_cag_equivalent} implies that the persistence diagrams for the MCF and the clique complex filtration of the CAG are the same.
\end{proof}

\section{Supplementary material for numerical experiments}

\subsection{Definition of stochastic block models}

Stochastic block models (SBMs) are random graph models to generate undirected, unweighted graphs with planted partition structure into different clusters (also called blocks), where the probability $P_{ij}$ of an edge between two vertices only depends on the cluster assignment~\autocite{hollandStochasticBlockmodelsFirst1983, karrerStochasticBlockmodelsCommunity2011}. When only a single partition is used to define the SBM we call it a `single-scale' SBM. To formalise this, recall that a partition $\mathcal{P}\in\Pi_X$ for a set of vertices $V=\{1,...,N\}$ into $c\in\mathbbm{N}$ clusters can be represented by a $N\times c$ \textit{cluster indicator matrix} $F$ where $F_{ir}=1$ if vertex $i\in V$ is part of cluster $r\in\{1,...,c\}$ and $F_{ir}=0$ otherwise. We further define a $c\times c$ \textit{affinity matrix} $\Omega$ such that $0\le \Omega_{rs}\le 1$ is the probability that a vertex in cluster $r$ is connected to a vertex in cluster $s$. We can then define the $N\times N$ \textit{probability matrix} $P$ as:
\begin{equation}
    P = F \Omega F^T,
\end{equation}
which allows to generate adjacency matrices $A$ corresponding to the SBM through a Bernoulli distribution $\mathbbm{P}[A_{ij}=1]=P_{ij}$ and $\mathbbm{P}[A_{ij}=0]=1-P_{ij}$ for each edge $(i,j)\in V\times V$. %
Using single-scale SBMs as building blocks, we can construct a `multiscale SBM' with planted partitions at $L$ different scales as follows. For each scale $\ell\in\{1,...,L\}$ define a $N\times c(\ell)$ cluster indicator matrix $F^{(\ell)}$ %
and a $c(\ell)\times c(\ell)$ affinity matrix $\Omega^{(\ell)}$. We then define the $N\times N$ probability matrix $P$ of the multiscale SBM as the convex combination:
\begin{equation}\label{S_eq:SBM-convex-sum}
    P := \sum_{\ell=1}^L w_\ell P^{(\ell)} =  \sum_{\ell=1}^L w_\ell F^{(\ell)}\Omega^{(\ell)}(F^{(\ell)})^T,
\end{equation}
where each level $\ell$ has an associated weight $w_\ell>0$ with $\sum_{\ell=1}^{L}w_\ell=1$. Before sampling, we additionally permute the vertex labels so that they contain no information about the block structure. We can again use the probability matrix $P$ to sample graphs from the multiscale SBM and the expected adjacency matrix $\mathbbm{E}[A]$ is given by $\mathbbm{E}[A]=P$. In contrast to previous constructions of multiscale SBMs using trees~\autocite{peixotoHierarchicalBlockStructures2014,schaubHierarchicalCommunityStructure2023}, our construction via convex sums of probability matrices of single-scale SBMs also extends to the case of non-hierarchical models with planted partition structure that is not nested across scales.

\subsection{Details on model parameters}

In our experiments, we restrict ourselves to models with equal-sized blocks, i.e., the clusters in a partition $F^{(l)}$ have the same size $s(l):=N/c(l)$. Moreover, we only consider affinity matrices $\Omega^{(l)}$ with diagonal elements $\Omega^{l}_{rr}=\alpha^{(l)}$ and off-diagonal elements $\Omega^{l}_{rs}=\beta$, for $r\neq s$, across all scales. We define four models for $N=270$ points each with fixed $\beta=0.001$ at each scale and set the probabilities $\alpha^l$ for different scales $l$ in a way such that the expected number of edges is always 2,500. 
\begin{itemize}
    \item[(i)] \textbf{Erd\"os-Renyi (ER)}: To generate a fully random ER graph, we can use an sSBM with a single block such that the partition indicator matrix is given by the vector of ones, i.e., $F^{(1)}=\boldsymbol{1}$, and we set $\alpha^{(1)}=0.06884$.
    \item[(ii)] \textbf{Single-scale SBM (sSBM)}: For the sSBM we define a partition indicator matrix $F^{(1)}$ with three equal-sized blocks and set $\alpha^{(1)}=0.20604$.
    \item[(iii)] \textbf{Multiscale SBM (mSBM)}: We define an mSBM model with three hierarchical scales. At the coarse scale, the partition indicator matrix $F^{(1)}$ represents three equal-sized blocks with weight $w_1=\frac{1}{14}$. At the medium scale, $F^{(2)}$ represents nine equal-sized blocks with weight $w_2=\frac{3}{14}$. At the fine scale, $F^{(3)}$ represents 27 equal-sized blocks with weight $w_3=\frac{10}{14}$. Across all three scales we set $\alpha^{(1)}=\alpha^{(2)}=\alpha^{(3)}=0.96159$.
    \item[(iv)] \textbf{Non-hierarchical multiscale SBM (nh-mSBM)}: We define an nh-mSBM model with three non-hierarchical scales corresponding to non-nested planted partitions. At the coarse scale, the partition indicator matrix $F^{(1)}$ represents three equal-sized blocks with weight $w_1=\frac{4}{65}$. At the medium scale, $F^{(2)}$ represents five equal-sized blocks with weight $w_2=\frac{9}{65}$. At the fine scale, $F^{(3)}$ represents 27 equal-sized blocks with weight $w_3=\frac{52}{65}$. Across all three scales we set $\alpha^{(1)}=\alpha^{(2)}=\alpha^{(3)}=0.91278$.
\end{itemize}

Figure~\ref{fig:sankey_summary} shows one adjacency matrix sampled from each of the models defined above (with vertices permuted so that the multiscale structure becomes visible). To account for stochasticity, we generate an ensemble of 200 graphs from each of the different models leading to 800 adjacency matrices $A_i$, $i=1,...,800$.

\begin{figure}[htb!]
    \centering
    \includegraphics[width=\textwidth]{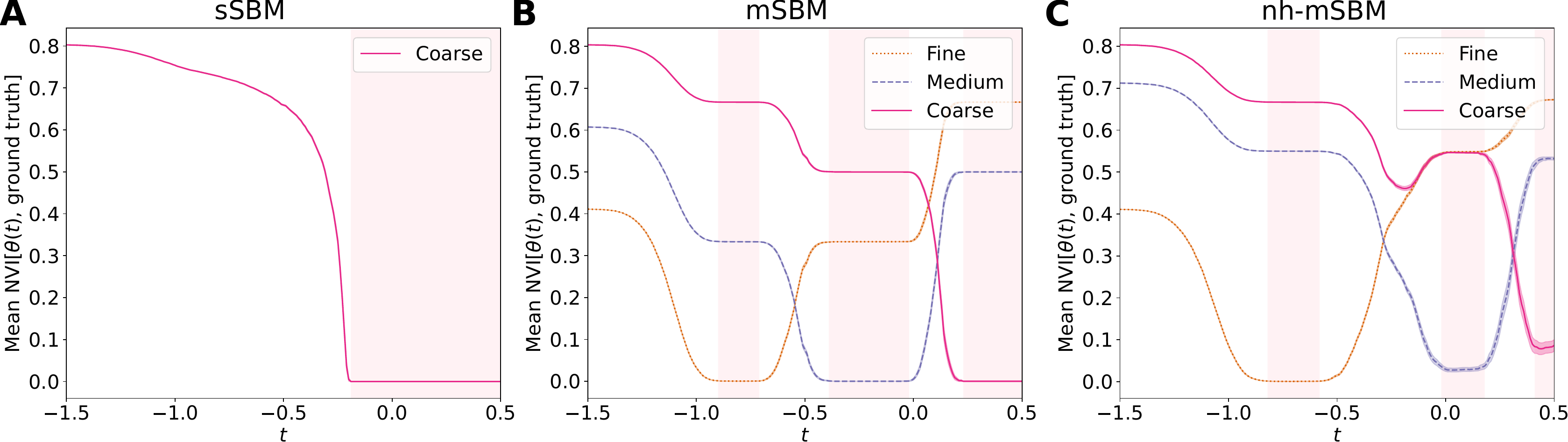}
    \caption{{Comparing the sequences of partitions and ground-truth partitions.} {We compute the average NVI with 95\% confidence intervals between all partitions in the sequences $\theta_i$ and ground-truth planted partitions of the different models. Low values of NVI indicate that we recover the single ground-truth partition of sSBM and the three ground-truth partitions of mSBM and nh-mSBM (shaded in pink).}}
    \label{S_fig:ensembles_nvi}

\end{figure}

We then apply multiscale clustering with Markov Stability (MS)~\autocite{lambiotteLaplacianDynamicsMultiscale2009,delvenneStabilityGraphCommunities2010, lambiotteRandomWalksMarkov2014,schaubMarkovDynamicsZooming2012} using the \texttt{PyGenStability} python package~\autocite{arnaudonPyGenStabilityMultiscaleCommunity2023} and obtain non-hierarchical sequences of partitions $\theta_i:[t_1,t_{200}]\rightarrow \Pi_V,t\mapsto\theta_i^t$ for each adjacency matrix $A_i$, $i=1,..,800$. The sequences of partitions $\theta_i$ are indexed over the continuous \textit{Markov time} $t$, which is evaluated at 200 log-scales equidistantly ranging from $t_1=-1.5$ to $t_{200}=0.5$. This leads to four ensembles of multiscale clusterings $\theta_i$ where indices $i=1,...,200$ correspond to ER, $i=201,...,400$ to sSBM, $i=401,...,600$ to mSBM and $i=601,...,800$ to nh-mSBM. From each ensemble, we visualise one sequence of partitions as a Sankey diagram in Figure~\ref{fig:sankey_summary}. We observe that the sequences of partitions get coarser with increasing scale and that they inherit a quasi-hierarchical nature for the sSBM, mSBM and nh-mSBM models reflecting the underlying (multiscale) planted partition structure.

For the three models with planted partitions (sSBM, mSBM and nh-mSBM), we compute the Normalised Variation of Information (NVI)~\autocite{kraskovHierarchicalClusteringBased2003} between $\theta_i(t)$, $t\ge t_1$, and the ground-truth partitions at different scales averaged over all the sequences in the ensemble and we also compute 95\% confidence intervals. The NVI is a metric on the space of partitions and low values of NVI close to 0 indicate a high similarity between partitions. We find that the sequences of partitions retrieved from sSBM, mSBM and nh-mSBM recover the ground-truth partitions at different scales (highlighted in pink), see Figure~\ref{S_fig:ensembles_nvi}.

\vskip 0.2in
\setlength\bibitemsep{0pt}

\printbibliography\setlength\bibitemsep{0pt}

\end{document}